\newcommand{\bel}[1]{\begin{equation}\label{#1}}
\newcommand{\be}{\begin{equation}}
\newcommand{\ba}{\begin{eqnarray}}
\newcommand{\ea}{\end{eqnarray}}
\newcommand{\qe}{\end{equation}}
\newcommand{\N}{{\mathbb N}}
\newcommand{\C}{{\mathbb C}}
\newcommand{\Q}{{\mathbb Q}}
\DeclareMathOperator{\Pic}{Pic}
\DeclareMathOperator{\GV}{GV}
\DeclareMathOperator{\IT}{IT_0}
\newcommand{\Hmm}[1]{\leavevmode{\marginpar{\tiny%
$\hbox to 0mm{\hspace*{-0.5mm}$\leftarrow$\hss}%
\vcenter{\vrule depth 0.1mm height 0.1mm width \the\marginparwidth}%
\hbox to
0mm{\hss$\rightarrow$\hspace*{-0.5mm}}$\\\relax\raggedright #1}}}
\theoremstyle{plain}
\newtheorem{thm}{Theorem}[section]
\newtheorem{prop}[thm]{Proposition}
\newtheorem{coro}[thm]{Corollary}
\newtheorem{lemma}[thm]{Lemma}
\theoremstyle{definition}
\newtheorem{defi}[thm]{Definition}
\newtheorem{exa}[thm]{Example}
\newtheorem*{ac}{Acknowledgements}
\begin{document}

\title[On vanishing and torsion-freeness results for adjoint pairs]{On vanishing and torsion-freeness results for adjoint pairs}
	
	\author{Fanjun Meng}
	\address{Department of Mathematics, Northwestern University, 2033 Sheridan Road, Evanston, IL 60208, USA}
	\email{fanjunmeng2022@u.northwestern.edu}

	\thanks{2020 \emph{Mathematics Subject Classification}: 14F17, 14E30.\newline
		\indent \emph{Keywords}: vanishing theorem, torsion-freeness.}

\begin{abstract}
We prove some vanishing and torsion-freeness results for higher direct images of adjoint pairs satisfying relative abundance and nefness conditions. These are applied to generic vanishing and weak positivity.
\end{abstract}

\maketitle
	\setcounter{tocdepth}{1}
	\tableofcontents

\section{Introduction}\label{1}

In this paper, we prove some vanishing and torsion-freeness results in the style of \cite{Kol86}, for higher direct images of adjoint pairs satisfying relative abundance and nefness conditions and give some examples and applications. We work over $\C$.

One of our original motivations was the question of whether $f_*\mathcal{O}_X(mK_X+L)$ is a GV-sheaf, where $f$ is a morphism from a smooth projective variety $X$ to an abelian variety $A$, $m$ is a positive integer and $L$ is a nef Cartier divisor. It is known that $f_*\mathcal{O}_X(mK_X)$ is a GV-sheaf by \cite{GL87, Hac04, PS14} in increasing generality, and $\mathcal{O}_X(K_X+L)$ satisfies a certain GV-property by \cite{PP11a}. However, it is unclear that whether $f_*\mathcal{O}_X(mK_X+L)$ is a GV-sheaf for arbitrary $L$. We discover that assuming that $L$ is $f$-abundant makes it true by proving some Koll\'ar-type vanishing results in this case. 

Vanishing and torsion-freeness theorems are proved for canonical bundles in \cite{Kol86}, for klt pairs in \cite{EV92, Kol95} and for log canonical pairs in \cite{Amb03, Fuj09}. Similar type results are proved for adjoint pairs satisfying abundance and nefness conditions in \cite{EV92} and twisted multiplier ideals in \cite{EP08} and obtained in the analytic setting in \cite{FM16, Mat16}. 

We prove a similar type vanishing result in the following theorem assuming only relative abundance instead of global abundance. Let $\frak{a}$ be a nonzero ideal sheaf and $c>0$ a rational number. If $L\otimes\frak{a}^c$ is nef and abundant, this theorem is known by \cite[Corollary 6.17]{EV92} and \cite[Theorem 3.2]{EP08}. For the definition of $L\otimes\frak{a}^c$ being $f$-abundant, $f$-nef or nef, see Definition \ref{ideal}.

\begin{thm}\label{main1}
Let $f$ be a surjective morphism from a klt pair $(X, \Delta)$ to a normal projective variety $Y$, $L$ a $\Q$-Cartier $\Q$-divisor on $X$ and $\frak{a}$ a nonzero ideal sheaf such that $L\otimes\frak{a}^c$ is nef and $f$-abundant, where $c>0$ is a rational number. Let $D$ be a Cartier divisor on $X$ such that $D\equiv K_X+\Delta+L+f^*N$, where $N$ is a nef and big $\Q$-Cartier $\Q$-divisor on $Y$. For $i>0$ and $j\geq 0$, we have  
	$$H^i\big(Y, R^jf_*\big(\mathcal{O}_X(D)\otimes\mathcal{J}\big((X, \Delta); \frak{a}^c\big)\big)\big)=0.$$
\end{thm}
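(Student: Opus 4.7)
The plan is to pass to a log resolution where $\mathfrak{a}$ is principalized, reduce to an instance of Koll\'ar-type vanishing for a log smooth adjoint pair whose positive part is \emph{globally} nef and abundant, and then invoke \cite[Corollary 6.17]{EV92} (equivalently \cite[Theorem 3.2]{EP08}). The one new ingredient needed is a lemma that promotes relative abundance of $L\otimes\mathfrak{a}^c$ to global abundance after twisting by the nef and big divisor $N$ pulled back from $Y$.

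First I would take a common log resolution $\mu:X'\to X$ of $(X,\Delta,\mathfrak{a})$, with $\mathfrak{a}\cdot\mathcal{O}_{X'}=\mathcal{O}_{X'}(-F)$ and $F+\mu^{-1}_*\Delta+\Exc(\mu)$ of simple normal crossing support, and set
$$\Theta:=\mu^*(K_X+\Delta)+cF-K_{X'},\qquad D':=\mu^*D-\lfloor\Theta\rfloor,\qquad g:=f\circ\mu.$$
A short calculation using $D\equiv K_X+\Delta+L+f^*N$ gives
$$D'\equiv K_{X'}+\{\Theta\}+\big(\mu^*L-cF\big)+g^*N,$$
where $\{\Theta\}$ is a simple normal crossing $\Q$-divisor with coefficients in $[0,1)$, so $(X',\{\Theta\})$ is log smooth klt. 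By the projection formula, $\mu_*\mathcal{O}_{X'}(D')=\mathcal{O}_X(D)\otimes\mathcal{J}((X,\Delta);\mathfrak{a}^c)$, and local vanishing for multiplier ideals gives $R^k\mu_*\mathcal{O}_{X'}(D')=0$ for $k>0$.

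The crux is the following auxiliary lemma, which I would establish separately: if $M$ is a nef $\Q$-divisor on a smooth projective variety $X'$ that is $g$-abundant for a surjective morphism $g:X'\to Y$ onto a normal projective variety, and $N$ is a nef and big $\Q$-Cartier $\Q$-divisor on $Y$, then $M+g^*N$ is nef and (globally) abundant. Applied to $M:=\mu^*L-cF$, which is nef by hypothesis and $g$-abundant by the $f$-abundance of $L\otimes\mathfrak{a}^c$ (invariance under birational pullback), this shows $H:=(\mu^*L-cF)+g^*N$ is nef and abundant on $X'$. The lemma itself is proved by comparing numerical and Iitaka dimensions on a general fiber $F_g$ of $g$: bigness of $N$ forces $\nu(M+g^*N)=\nu(M|_{F_g})+\dim Y$ via an expansion of intersection numbers where high self-powers of $M$ vanish on $F_g$, while an easy-addition argument yields $\kappa(M+g^*N)\geq\kappa(M|_{F_g})+\dim Y$; $g$-abundance of $M$ then closes the inequalities.

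With $D'\equiv K_{X'}+\{\Theta\}+H$ on the log smooth klt pair $(X',\{\Theta\})$ and $H$ globally nef and abundant, \cite[Corollary 6.17]{EV92} applied over $g$ yields $H^i(Y,R^jg_*\mathcal{O}_{X'}(D'))=0$ for all $i>0$ and $j\geq 0$. Combined with $R^k\mu_*\mathcal{O}_{X'}(D')=0$ for $k>0$, the Leray spectral sequence for $g=f\circ\mu$ collapses and identifies $R^jg_*\mathcal{O}_{X'}(D')$ with $R^jf_*\bigl(\mathcal{O}_X(D)\otimes\mathcal{J}((X,\Delta);\mathfrak{a}^c)\bigr)$, giving the conclusion. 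The main obstacle in this plan is verifying the abundance lemma used to promote $f$-abundance to abundance; once it is in hand, everything else is standard multiplier ideal and spectral sequence bookkeeping.
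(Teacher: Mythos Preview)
Your overall architecture is sound and genuinely different from the paper's: you reduce to a log-smooth model, then aim to show that the twisted divisor $H=(\mu^*L-cF)+g^*N$ is \emph{globally} nef and abundant, so that the known case \cite[Corollary 6.17]{EV92} applies. The log-resolution bookkeeping, the identification $\mu_*\mathcal{O}_{X'}(D')=\mathcal{O}_X(D)\otimes\mathcal{J}((X,\Delta);\mathfrak{a}^c)$, local vanishing, and the spectral-sequence collapse are all fine.

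The gap is in your sketch of the abundance lemma. The inequality you need is $\kappa(M+g^*N)\geq\kappa(M|_{F_g})+\dim Y$, and this is \emph{not} easy addition: easy addition is the opposite inequality $\kappa\leq\kappa|_F+\dim Y$. What you are invoking is an Iitaka-type superadditivity statement, which does not follow from elementary section-counting. Likewise, the equality $\nu(M+g^*N)=\nu(M|_{F_g})+\dim Y$ is only half-justified by your intersection expansion: the $\geq$ direction is fine (the term $M^{k}\cdot(g^*N)^{\dim Y}$ restricts to a positive number on a general fiber), but the $\leq$ direction needs an argument for the terms $M^{l}\cdot(g^*N)^{\dim Y+k+1-l}$ with $l>k+1$, and ``high self-powers of $M$ vanish on $F_g$'' does not control these when the $g^*N$-exponent is below $\dim Y$.

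Your lemma is nonetheless true, but the clean way to prove it is precisely the structure theorem you avoid: by \cite[Proposition 6-1-3]{KMM87} (Proposition~\ref{abun} here), after a birational modification one has $\mu'^*M\sim_{\Q}g'^*B$ with $B$ $h$-nef and $h$-big over $Y$; since $M$ is globally nef, $B$ is nef, and then $B+h^*N$ is nef and big (write $B\sim_{\Q}E_B+H_B$ with $H_B$ $h$-ample, $N\sim_{\Q}E_N+H_N$ with $H_N$ ample, so $H_B+h^*H_N$ is ample), whence $M+g^*N$ is the pullback of a nef and big divisor and hence abundant. Once you import this input, your route works.

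By contrast, the paper does not pass through global abundance at all. It applies Proposition~\ref{abun} directly, then uses the decompositions $B\sim_{\Q}E_B+H_B$ and $N\sim_{\Q}E_N+H_N$ to absorb small multiples of $g^*E_B$ and $g^*h^*E_N$ into the klt boundary, leaving an \emph{ample} class plus a pullback of $(1-\delta)N$ from $Y$, and then quotes Koll\'ar's vanishing \cite[Theorem 10.19]{Kol95}. So both arguments rest on the same structural input (Proposition~\ref{abun}); yours packages it as a separate abundance lemma feeding into \cite{EV92}, while the paper unwinds it inside a single Kodaira-trick computation feeding into \cite{Kol95}. Your packaging is conceptually cleaner once the lemma is correctly proved, but as written the lemma's justification is the missing piece.
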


The sheaf $\mathcal{J}\big((X, \Delta); \frak{a}^c\big)$ is the multiplier ideal associated to the nonzero ideal sheaf $\frak{a}$ and the rational number $c>0$ on the klt pair $(X, \Delta)$. There are situations where $L\otimes\frak{a}^c$ is $f$-abundant but not abundant. One example is given in Section \ref{4}, see Example \ref{CJ} for details. Note that Theorem \ref{main1} is not true if we only assume $L\otimes\frak{a}^c$ is $f$-nef and $f$-abundant (cf. Example \ref{cou}). However, the torsion-freeness result still holds if we only assume relative nefness, which is the next theorem. If $M\otimes\frak{a}^c$ is nef and abundant, this theorem is known by \cite[Theorem 3.2]{EP08}.

\begin{thm}\label{main2}
Let $f$ be a surjective morphism from a klt pair $(X, \Delta)$ to a normal projective variety $Y$, $M$ a $\Q$-Cartier $\Q$-divisor on $X$ and $\frak{a}$ a nonzero ideal sheaf such that $M\otimes \frak{a}^c$ is $f$-nef and $f$-abundant, where $c>0$ is a rational number. Let $D$ be a Cartier divisor on $X$ such that $D\equiv K_X+\Delta+M$. Then 
$$R^jf_*\big(\mathcal{O}_X(D)\otimes\mathcal{J}\big((X, \Delta); \frak{a}^c\big)\big)$$
is torsion-free for $j\geq 0$.
\end{thm}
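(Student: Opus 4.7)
The plan is to reduce this relative statement to the known global torsion-freeness for adjoint pairs with nef and abundant twists --- namely, \cite[Theorem 3.2]{EP08} --- by passing to a log resolution and then absorbing the relative abundance and nefness into absolute ones via the pullback of an ample divisor from $Y$. First take a common log resolution $\pi:X'\to X$ of $(X,\Delta)$ and of $\frak{a}$, writing $\frak{a}\cdot\mathcal{O}_{X'}=\mathcal{O}_{X'}(-F)$ with $F$ an effective Cartier divisor whose support, together with $\pi^{-1}\Supp(\Delta)\cup\Exc(\pi)$, has simple normal crossings. A standard computation with multiplier ideals on klt pairs yields a Cartier divisor $D'$ on $X'$ satisfying $D'\equiv K_{X'}+\Delta'+M'$, where $(X',\Delta')$ is klt with $\lfloor\Delta'\rfloor=0$ and $M':=\pi^*M-cF$, together with the identification
$$\pi_*\mathcal{O}_{X'}(D')=\mathcal{O}_X(D)\otimes\mathcal{J}\big((X,\Delta);\frak{a}^c\big),\qquad R^i\pi_*\mathcal{O}_{X'}(D')=0\text{ for }i>0.$$
By the Leray spectral sequence for $g:=f\circ\pi$, it suffices to show that $R^jg_*\mathcal{O}_{X'}(D')$ is torsion-free. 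Unwinding Definition \ref{ideal}, the hypothesis that $M\otimes\frak{a}^c$ is $f$-nef and $f$-abundant translates exactly to $M'$ being $g$-nef and $g$-abundant on $X'$.

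Now fix a very ample divisor $A$ on $Y$ and set $M'_m:=M'+g^*(mA)$ and $D'_m:=D'+g^*(mA)$ for a positive integer $m$, so $D'_m$ is Cartier with $D'_m\equiv K_{X'}+\Delta'+M'_m$. The key claim is that for $m\gg 0$, the divisor $M'_m$ is nef and abundant on $X'$ globally. Nefness follows from the standard relative-to-absolute criterion: on a projective variety, a $g$-nef divisor becomes globally nef after adding a sufficiently positive multiple of the pullback of an ample class from the base. Abundance follows by passing to a relative Iitaka model $h:X''\to Z$ of $M'$ over $Y$ and noting that, since the class on $Z$ is big relative to $Z\to Y$, adding the pullback of a big class on $Y$ turns it globally big; its pullback to $X''$ then has Iitaka dimension equal to numerical dimension equal to $\dim Z$. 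Therefore \cite[Theorem 3.2]{EP08} applied to $(X',\Delta')$, $D'_m$, and $M'_m$ (with trivial multiplier ideal) yields that $R^jg_*\mathcal{O}_{X'}(D'_m)$ is torsion-free. By the projection formula, this sheaf equals $R^jg_*\mathcal{O}_{X'}(D')\otimes\mathcal{O}_Y(mA)$, so tensoring by $\mathcal{O}_Y(-mA)$ shows $R^jg_*\mathcal{O}_{X'}(D')$ is torsion-free as well.

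The main obstacle is the step in which $g$-nefness and $g$-abundance of $M'$ get promoted to their absolute counterparts upon addition of $g^*(mA)$. The nefness step is a standard cone-of-curves argument, relying on projectivity of $X'$ and $Y$ to ensure the negativity of $M'$ on non-$g$-contracted curves is uniformly bounded by their $g^*A$-degree. The abundance step is more delicate and must be carried out at the level of the relative Iitaka fibration of $M'$ over $Y$, using the precise definition of $g$-abundance to guarantee that the numerical and Iitaka dimensions of $M'_m$ coincide globally on $X'$ for $m$ large.
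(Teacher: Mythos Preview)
Your reduction to the log resolution is fine, as is the plan to untwist by $\mathcal{O}_Y(-mA)$ at the end. The gap is the nefness step.

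The assertion that a $g$-nef $\Q$-divisor becomes globally nef after adding $g^*(mA)$ for $m\gg 0$ is \emph{not} the standard relative-to-absolute passage: that result is for $g$-\emph{ample} divisors and uses openness of the ample cone. For merely $g$-nef $M'$ the cone argument you sketch breaks down, because the slice $\overline{NE}(X')\cap\{g^*A\cdot\gamma=1\}$ is not compact and the ratio $(M'\cdot\gamma)/(g^*A\cdot\gamma)$ is not a priori bounded below. Concretely, on $E\times E$ with first projection $f$ and fiber class $F_1$, the class $L=F_2-\Delta$ has $L\cdot F_1=0$, hence is $f$-nef, yet $(L+mF_1)^2=L^2=-2$ for every $m$, so $L+mF_1$ is never nef. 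This $L$ is not $f$-abundant, so it does not directly refute your statement, but it does refute the ``standard cone-of-curves argument'' you invoke; if the nefness claim is salvageable under the extra $g$-abundance hypothesis, that requires a real argument you have not given. Your abundance step, moreover, tacitly uses nefness to identify $\kappa_\sigma$ of the pullback with $\dim Z$.

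The paper sidesteps this entirely. After the same log-resolution reduction it does \emph{not} try to make $M$ globally nef. Instead it introduces the relative asymptotic multiplier ideal: Proposition~\ref{tmi} shows $\mathcal{J}\big((X,\Delta);f,\lVert M\rVert\big)=\mathcal{O}_X$ when $M$ is $f$-nef and $f$-abundant (this is where Proposition~\ref{abun} enters), and Lemma~\ref{tf} with $L=0$ then gives the torsion-freeness. The mechanism is that on a resolution of the relative base ideal of $|pM|$, the class $\varphi^*M-\tfrac{1}{p}F_p$ is $(f\circ\varphi)$-\emph{semiample}; a relatively semiample class genuinely becomes globally generated after adding the pullback of a sufficiently ample divisor from $Y$, and can then be absorbed into the klt boundary (Theorem~\ref{1}(ii)). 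That is the honest replacement for the nefness step you wanted. If you try to repair your argument by going to the Iitaka model, splitting off a relatively ample piece via Kodaira, and absorbing the leftover effective part into the boundary, you will end up reproving Theorem~\ref{1}.
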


Our proofs of the two theorems above rely on the characterization of relatively abundant divisors in \cite{KMM87} and \cite{Nak86}, see Proposition \ref{abun} for details. It also relies on the use of the vanishing and torsion-freeness theorems for klt pairs in \cite{Kol95}.

Based on Theorem \ref{main1}, we deduce a Koll\'ar-type vanishing result for the pluricanonical case following the strategy used in \cite[Theorem 1.7]{PS14}. When $L$ is nef and $f$-big, this theorem is known by \cite[Variant 1.5]{PS14}.

\begin{thm}\label{pluri}
Let $f$ be a morphism from a klt pair $(X, \Delta)$ to a projective variety $Y$ of dimension $n$, $L$ a nef and $f$-abundant $\Q$-Cartier $\Q$-divisor on $X$ and $H$ a Cartier divisor on $Y$ such that $\mathcal{O}_Y(H)$ is ample and globally generated. Let $D$ be a Cartier divisor on $X$ such that $D\equiv m(K_X+\Delta+L)+lf^*H$, where $m\geq1$ and $l>(m-1)(n+1)$ are rational numbers. Then 
$$H^i(Y, f_*\mathcal{O}_X(D))=0$$
for $i>0$. If $l>(m-1)(n+1)+n$, $f_*\mathcal{O}_X(D)$ is $0$-regular with respect to $\mathcal{O}_Y(H)$ and thus globally generated.
\end{thm}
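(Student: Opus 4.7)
I would first reduce the second part of the theorem to the first via Castelnuovo-Mumford regularity. Since $\mathcal{O}_Y(H)$ is ample and globally generated, $f_*\mathcal{O}_X(D)$ is $0$-regular if and only if $H^i(Y, f_*\mathcal{O}_X(D)\otimes\mathcal{O}_Y(-iH))=0$ for $1\le i\le n$; by the projection formula this equals $H^i(Y, f_*\mathcal{O}_X(D-if^*H))$. Since $D-if^*H\equiv m(K_X+\Delta+L)+(l-i)f^*H$ and $l-i>(m-1)(n+1)$ for $i\le n$ under the hypothesis $l>(m-1)(n+1)+n$, the first part supplies these vanishings, after which Mumford's lemma yields global generation.

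The vanishing itself I would prove by induction on $m$ (on $\lceil m\rceil$ to accommodate rational values). The base case $m=1$ follows immediately from Theorem~\ref{main1}: the hypothesis gives $l>0$, so $N:=lH$ is ample (hence nef and big) on $Y$. Taking $\frak{a}=\mathcal{O}_X$, the multiplier ideal $\mathcal{J}((X,\Delta);\frak{a}^c)$ is $\mathcal{O}_X$, and Theorem~\ref{main1} yields $H^i(Y, f_*\mathcal{O}_X(D))=0$ for $i>0$.

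For $m>1$, following the strategy of \cite{PS14}, I would use the inductive hypothesis to produce an effective divisor $E$ on $X$ and apply Theorem~\ref{main1} to an enlarged klt pair $(X,\Delta+cE)$. Concretely, at an integer level $k\le\lceil m\rceil-1$ covered by the induction, pick a Cartier divisor $D'\equiv k(K_X+\Delta+L)+l'f^*H$ with $l'=k(n+1)$ (the smallest integer choice satisfying the $0$-regularity hypothesis at level $k$); then $f_*\mathcal{O}_X(D')$ is globally generated by the reduction of the first paragraph. Taking $E$ associated to a high enough multiple of $D'$ (say $E\equiv MD'$ for $M$ large) and setting $c:=(m-1)/(Mk)$ makes $c$ arbitrarily small so that $(X,\Delta+cE)$ is klt. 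A direct computation shows
\[
D\equiv K_X+(\Delta+cE)+L+\bigl(l-l_1-(m-1)l'/k\bigr)f^*H + f^*(l_1H),
\]
and since $(m-1)l'/k = (m-1)(n+1)$, the hypothesis $l>(m-1)(n+1)$ permits choosing $l_1>0$ small with $l-l_1-(m-1)l'/k\ge 0$. Then $L+\bigl(l-l_1-(m-1)l'/k\bigr)f^*H$ is nef and $f$-abundant, $N:=l_1 H$ is ample, and Theorem~\ref{main1} applied to the enlarged klt pair (with trivial multiplier ideal from $\frak{a}=\mathcal{O}_X$) yields $H^i(Y, f_*\mathcal{O}_X(D))=0$ for $i>0$.

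The main technical hurdle is ensuring $(X,\Delta+cE)$ remains klt in the presence of a possibly nontrivial base locus of $|D'|$ on $X$---the global generation of $f_*\mathcal{O}_X(D')$ only controls matters on $Y$---which is handled by taking $M$ large so that $c$ falls below the log canonical threshold of $E$. Rational values of $m$ and $l$ are accommodated by the density of the auxiliary parameters.
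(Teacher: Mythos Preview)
Your reduction of $0$-regularity to the vanishing statement and your $m=1$ case (via Theorem~\ref{main1}, after Stein-factorizing $f$) are fine. The inductive step, however, has a genuine gap: the claim that enlarging $M$ forces $c=(m-1)/(Mk)$ below the log canonical threshold of $E\in|MD'|$ is false. What enters the boundary is $cE\equiv\tfrac{m-1}{k}D'$, a fixed multiple independent of $M$, and $\tfrac{1}{M}E$ for general $E\in|MD'|$ does \emph{not} tend to zero when $D'$ has a nontrivial divisorial stable base locus. If a prime divisor $P$ lies in the fixed part of $|MD'|$ with multiplicity $\ge\lambda M$ for some $\lambda>0$ and all divisible $M$, then $\operatorname{mult}_P(cE)\ge\tfrac{m-1}{k}\lambda$ for every such $M$; nothing in the hypotheses forces this below $1$, since $K_X+\Delta$ is unconstrained and hence so is the stable base locus of $D'\equiv k(K_X+\Delta+L)+l'f^*H$. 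Global generation of $f_*\mathcal{O}_X(D')$ says nothing about this relative base locus on $X$, as you yourself note.

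The paper does not induct on $m$; it runs a one-step bootstrap at level $m$. After a log resolution on which the relative base ideal of $|D|$ becomes a divisor $F$, it lets $k$ be the least integer with $f_*\mathcal{O}_X(D)\otimes\mathcal{O}_Y(kH)$ globally generated, chooses a general $B\sim D+kf^*H-F$, and sets $T:=\lfloor\Delta+\tfrac{m-1}{m}F\rfloor$. Since $\Delta$ has coefficients $<1$ one has $T\le F$, and subtracting any effective divisor $\le F$ from $D$ leaves $f_*$ unchanged: $f_*\mathcal{O}_X(D-T)\cong f_*\mathcal{O}_X(D)$. Now $D-T$ is adjoint for the klt pair $\bigl(X,\Delta+\tfrac{m-1}{m}F-T+\tfrac{m-1}{m}B\bigr)$, so the $m=1$ case applies and gives vanishing in a range depending on $k$; feeding this back into the minimality of $k$ yields $k\le m(n+1)-l$ and hence the claimed bound. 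The idea you are missing is precisely this subtraction: the base divisor $F$ is absorbed into $D$ (harmless for $f_*$) rather than into the boundary (where it would destroy klt).
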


In Section \ref{4}, we discuss several applications based on our technical theorems. We give the adjoint pair analogue of Viehweg's result on weak positivity, see Corollary \ref{wp}. In a different direction, we consider the GV-property of pushforwards of adjoint pairs satisfying nefness and relative abundance conditions under morphisms to abelian varieties, addressing the motivational question at the beginning of the introduction. This theorem is known when $m=1$ and $L=0$ by \cite{GL87, Hac04} and when $m\geq1$ and $L$ is nef and $f$-big by \cite[Variant 5.6]{PS14}.

\begin{thm}\label{GV}
Let $f$ be a morphism from a klt pair $(X, \Delta)$ to an abelian variety $A$, $L$ a nef and $f$-abundant $\Q$-Cartier $\Q$-divisor on $X$ and $D$ a Cartier divisor on $X$ such that $D\equiv m(K_X+\Delta+L)$, where $m\geq1$ is a rational number. Then $f_*\mathcal{O}_X(D)$ is a $\GV$-sheaf.
\end{thm}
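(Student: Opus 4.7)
The plan is to follow the strategy of \cite[Variant 5.6]{PS14}, using our new Theorem \ref{pluri} in place of its $f$-big analogue. The key input is Hacon's criterion for GV-sheaves on an abelian variety: a coherent sheaf $\mathcal{F}$ on $A$ is GV provided there exists an ample, globally generated line bundle $\mathcal{O}_A(H)$ such that
$$H^i\bigl(A,\, \mathcal{F} \otimes \mathcal{O}_A(kH) \otimes P\bigr) = 0$$
for all $i>0$, all $P \in \Pic^0(A)$, and all sufficiently large integers $k$. The task is to verify this vanishing for $\mathcal{F} = f_*\mathcal{O}_X(D)$.

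Fix an ample, globally generated Cartier divisor $H$ on $A$, and let $P \in \Pic^0(A)$ be arbitrary. Because $P$ is numerically trivial, so is $f^*P$, and therefore
$$D + k f^*H + f^*P \;\equiv\; m(K_X + \Delta + L) + k f^*H.$$
With $Y = A$ and $n = \dim A$, Theorem \ref{pluri} applies whenever $k > (m-1)(n+1)$, yielding
$$H^i\bigl(A,\, f_*\mathcal{O}_X(D + k f^*H + f^*P)\bigr) = 0 \quad \text{for all } i > 0.$$
The projection formula identifies this pushforward with $f_*\mathcal{O}_X(D) \otimes \mathcal{O}_A(kH) \otimes P$, so the vanishing required by Hacon's criterion holds for every $k > (m-1)(n+1)$, and consequently $f_*\mathcal{O}_X(D)$ is a GV-sheaf.

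The substance of the argument lies in Theorem \ref{pluri}, which supplies the vanishing in the new $f$-abundant regime; the remainder is formal, combining the projection formula with the numerical triviality of $f^*P$. The only subtle point is to invoke the correct form of the Hacon-type criterion, namely that vanishing along a single cofinal family $\{\mathcal{O}_A(kH) \otimes P\}$ is enough to conclude GV-ness, as used in \cite{Hac04, PS14}; once this is in hand, Theorem \ref{pluri} does all the geometric work.
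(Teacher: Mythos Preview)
Your argument has a genuine gap: the ``Hacon-type criterion'' you invoke is not a valid characterization of GV-sheaves. The condition
\[
H^i\bigl(A,\ \mathcal{F}\otimes\mathcal{O}_A(kH)\otimes P\bigr)=0 \quad\text{for all } i>0,\ P\in\Pic^0(A),\ k\gg 0
\]
is satisfied by \emph{every} coherent sheaf $\mathcal{F}$ on $A$, by Serre vanishing applied in the family over the projective variety $\Pic^0(A)$. So what you prove with Theorem \ref{pluri} is automatic and says nothing about GV-ness. (Concretely, $\mathcal{O}_A(-H)$ is not GV, yet your criterion holds for it.)

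The actual Hacon criterion \cite[Corollary 3.1]{Hac04} lives on the \emph{dual} abelian variety: one must show that for a sufficiently positive ample line bundle $M$ on $\hat{A}$, with associated isogeny $\varphi_M\colon \hat{A}\to A$, one has $H^i(\hat{A},\varphi_M^*\mathcal{F}\otimes M)=0$ for $i>0$. This is what the paper does: after base-changing along $\varphi_M$ to get $f'\colon X'\to\hat{A}$, one checks that $(X',\Delta')$ stays klt (since $\varphi_M$ and hence $\psi$ are \'etale), that $\psi^*L$ remains nef and $f'$-abundant, and that $\psi^*D\equiv m(K_{X'}+\Delta'+\psi^*L)$. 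Only then does Theorem \ref{pluri}, applied on $\hat{A}$ with $l\gg 0$, give the required vanishing. Your proof skips the isogeny step entirely, and that step is where the content lies; the rest of your outline (projection formula, numerical triviality of $f^*P$) is fine but insufficient.
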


It is known that pushforwards of pluricanonical bundles under morphisms to abelian varieties satisfy an even stronger property: namely they have the Chen--Jiang decomposition by \cite{CJ18, PPS17, LPS20} in increasing generality, while $f_*\mathcal{O}_X(D)$ where $D\sim_{\Q}m(K_X+\Delta)$ has the Chen--Jiang decomposition, as proved independently in \cite{Jia21} and \cite{Men21}. It is therefore natural to ask whether $f_*\mathcal{O}_X(D)$ has the Chen--Jiang decomposition under the hypotheses of Theorem \ref{GV}. This turns out to be false; we give a counterexample in Example \ref{CJ} in which $f_*\mathcal{O}_X(D)$ does not have a Chen--Jiang decomposition for any rational number $m\geq1$ and any Cartier divisor $D\equiv m(K_X+\Delta+L)$. To ensure that there exists a Cartier divisor $D$ which is numerically equivalent to $m(K_X+\Delta+L)$ such that $f_*\mathcal{O}_X(D)$ has the Chen--Jiang decomposition, it seems that we may additionally need to assume that $K_X+\Delta$ is pseudo-effective.

\begin{ac}
{I would like to express my sincere gratitude to my advisor Mihnea Popa for proposing a related problem and for helpful discussions and constant support. I thank the referee for helpful comments.}
\end{ac}

\section{Preliminaries}\label{2}

We work over $\C$ and all varieties are projective throughout the paper. A \emph{fibration} is a projective surjective morphism with connected fibers. For the definitions and basic results on the singularities of pairs we refer to \cite{KM98}. We always ask the boundary $\Delta$ in a pair $(X,\Delta)$ to be effective. For the definitions and basic results on multiplier ideals and asymptotic multiplier ideals we refer to \cite[Chapters 9 and 11]{Laz04b}.

First, we give the definition for the generic fiber of a morphism which is not necessarily surjective.

\begin{defi}\label{generic fiber}
Let $f$ be a morphism from a normal projective variety $X$ to a projective variety $Y$. The Stein factorization of $f$ gives a decomposition of $f$ as $g\circ h$, where $h$ is a fibration and $g$ is a finite morphism. The \emph{generic fiber} of $f$ is defined as the generic fiber of $h$.
\end{defi}	

Let $X$ be a normal projective variety, $L$ a $\Q$-Cartier $\Q$-divisor on $X$ and $f$ a morphism from $X$ to a projective variety $Y$. We denote the \emph{numerical equivalence} by the symbol $\equiv$. We denote the \emph{Kodaira dimension} of $L$ by $\kappa(X, L)$ and the \emph{numerical dimension} of $L$ by $\kappa_\sigma(X, L)$, see \cite[Chapter V]{Nak04} and \cite{Kaw85b}. The $\Q$-Cartier $\Q$-divisor $L$ is said to be \emph{$f$-nef} if $L\cdot C\geq0$ for any curve $C\subset X$ contracted by $f$. It is said to be \emph{$f$-big} if it is big after being restricted to the generic fiber of $f$. It is said to be \emph{abundant} if $\kappa(X, L)=\kappa_\sigma(X, L)\geq 0$. It is said to be \emph{$f$-abundant} if it is abundant after being restricted to the generic fiber of $f$.

We will need a result about the characterization of a relatively nef and relatively abundant divisor which is \cite[Proposition 6-1-3]{KMM87} and \cite[Lemma 6]{Nak86}.

\begin{prop}\label{abun}
Let $X$ be a normal variety with a proper morphism $f\colon X\to Y$ onto a variety $Y$ and $D$ a $f$-nef and $f$-abundant $\Q$-Cartier $\Q$-divisor on $X$. Then there exists a commutative diagram
\begin{center}
\begin{tikzcd}
			& X' \arrow[dl, "\mu" swap] \arrow[dr, "g"]  \\
			X  \arrow[dr, "f"] &&Z \arrow[dl, "h" swap]\\&Y 
\end{tikzcd}
\end{center}
which satisfies the following conditions:
\begin{enumerate}
	\item[$\mathrm{(i)}$] $\mu$, $g$ and $h$ are projective morphisms,

	\item[$\mathrm{(ii)}$] $X'$ and $Z$ are smooth varieties,
	
	\item[$\mathrm{(iii)}$] $\mu$ is a birational morphism and $g$ is a fibration,
	
	\item[$\mathrm{(iv)}$] There exists a $h$-nef and $h$-big $\Q$-Cartier $\Q$-divisor $B$ on $Z$ such that $\mu^*L\sim_{\Q}g^*B$.
\end{enumerate}
\end{prop}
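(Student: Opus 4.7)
The approach is to relativize the classical Iitaka fibration construction for nef and abundant divisors. First, one reduces to the case where $X$ is smooth by passing to a resolution of singularities $\mu_0 \colon X_0 \to X$; the pullback $\mu_0^* D$ remains $(f\circ\mu_0)$-nef and $(f\circ\mu_0)$-abundant, since $f$-nefness is detected on contracted curves and $f$-abundance is defined on the generic fiber, where $\mu_0$ restricts to a birational morphism.

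Next, I work on the generic fiber $X_\eta$ of $f$ in the sense of Definition \ref{generic fiber}. The restriction $D|_{X_\eta}$ is nef and abundant in the classical absolute sense, so by the non-relative version of Kawamata's theorem on nef and abundant divisors (the $Y = \mathrm{pt}$ case of the proposition) there exists a birational modification of $X_\eta$, a fibration from that modification onto a smooth variety $W_\eta$, and a nef and big $\Q$-divisor $B_\eta$ on $W_\eta$ whose pullback is $\Q$-linearly equivalent to the pullback of $D|_{X_\eta}$. To globalize this picture, I choose $m$ sufficiently divisible so that $mD$ is Cartier, form the relative section algebra $\mathcal{R} = \bigoplus_{k\geq 0} f_*\mathcal{O}_X(kmD)$, and set $Z_0 = \mathrm{Proj}_Y \mathcal{R}$, which is projective over $Y$ via some $h_0 \colon Z_0 \to Y$. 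The induced rational map $X \dashrightarrow Z_0$ over $Y$ recovers, on the generic fiber of $f$, precisely the Iitaka fibration of $D|_{X_\eta}$ onto $W_\eta$. After resolving indeterminacies of this rational map, resolving singularities of $Z_0$, and taking Stein factorization on the base, one obtains smooth varieties $X'$ and $Z$ together with morphisms $\mu$, $g$, $h$ as in the diagram, with $\mu$ birational, $g$ a fibration, and $h$ projective.

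Finally, I would define $B$ on $Z$ as the divisor class making $\mu^* D \sim_\Q g^* B$; on the generic fiber of $h$ this recovers $B_\eta$, giving $h$-bigness, while $h$-nefness of $B$ follows from $f$-nefness of $D$ via the projection formula applied to curves contracted by $h$ (lifted to curves contracted by $f\circ\mu$ through $g$). The main obstacle is arranging the $\Q$-linear equivalence $\mu^* D \sim_\Q g^* B$ globally rather than only on the generic fiber of $f$: a priori the two divisors agree only generically over $Y$, and the discrepancy is a $\Q$-Cartier divisor supported on fibers of $f\circ\mu$. Absorbing this vertical discrepancy into either $B$ (via pullbacks from $Y$) or into $X'$ (via further blow-ups of the indeterminacy locus) is the delicate step that is carried out in detail in \cite[Proposition 6-1-3]{KMM87} and \cite[Lemma 6]{Nak86}.
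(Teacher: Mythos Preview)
The paper does not prove this proposition at all: it is stated in Section~\ref{2} as a quotation of \cite[Proposition 6-1-3]{KMM87} and \cite[Lemma 6]{Nak86}, with no argument given. So there is no ``paper's own proof'' to compare your proposal against.

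Your sketch is a reasonable outline of how the argument in those references actually goes (reduce to smooth $X$, take the relative Iitaka fibration via $\mathrm{Proj}_Y$ of the section ring, resolve, and then arrange the global $\Q$-linear equivalence $\mu^*D\sim_\Q g^*B$), and you yourself explicitly defer the delicate part---absorbing the vertical discrepancy---back to \cite{KMM87} and \cite{Nak86}. In that sense your proposal is not really an independent proof but rather a summary of the cited literature, which is exactly what the paper does by citation. One small comment: your justification of $h$-nefness of $B$ via lifting curves through $g$ is not quite enough on its own, since a curve in $Z$ contracted by $h$ need not lift to a curve in $X'$; the actual argument uses that $\mu^*D$ is nef on the fibers of $h\circ g$ and that $B$ is already known to come from the section ring, but this is again part of what is handled in the references you cite.
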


\begin{defi}\label{ideal}
Let $f$ be a morphism from a normal projective variety $X$ to a projective variety $Y$, $c>0$ a rational number, $L$ a $\Q$-Cartier $\Q$-divisor on $X$ and $\frak{a}$ a nonzero ideal sheaf. We say that $L\otimes\frak{a}^c$ is \emph{nef}, \emph{$f$-nef} or \emph{$f$-abundant} if there exists a log resolution of $\frak{a}$ denoted by $\varphi\colon W\to X$ such that $\frak{a}\cdot\mathcal{O}_W=\mathcal{O}_W(-F)$ where $F$ is an effective divisor and $\varphi^*L-cF$ is nef, $(f\circ\varphi)$-nef or $(f\circ\varphi)$-abundant respectively.
\end{defi}

It is easy to see that Definition \ref{ideal} does not depend on the choice of the log resolution $\varphi$ and coincides with the usual notions of (relative) nefness and abundance for $\Q$-Cartier $\Q$-divisors when $\frak{a}=\mathcal{O}_X$. 

Next, we define the asymptotic multiplier ideals for $\Q$-Cartier $\Q$-divisors.

\begin{defi}\label{MFQ}
Let $f$ be a surjective morphism from a klt pair $(X, \Delta)$ to a projective variety $Y$ and $M$ a $\Q$-Cartier $\Q$-divisor on $X$. The \emph{asymptotic multiplier ideal} $\mathcal{J}\big((X, \Delta); f, ||M||\big)$ is defined as the usual multiplier ideal $\mathcal{J}\big((X, \Delta); f, \frac{1}{p}|pM|\big)$, where $p>0$ is an integer sufficiently big and divisible such that $pM$ is a Cartier divisor. For the definition of the latter, see \cite[Generalization 9.2.21]{Laz04b}.
\end{defi}

It is easy to see that Definition \ref{MFQ} does not depend on the choice of the sufficiently big and divisible integer $p>0$ and coincides with the definition of the usual asymptotic multiplier ideals when $M$ is a Cartier divisor. 

\section{Vanishing and torsion-freeness results}\label{3}

We first prove Theorem \ref{main1}, as statement (i) of the following theorem. Statement (ii) is used in the proof of Theorem \ref{main2}.

\begin{thm}\label{1}
Let $f$ be a surjective morphism from a klt pair $(X, \Delta)$ to a normal projective variety $Y$, $L$ a $\Q$-Cartier $\Q$-divisor on $X$ and $\frak{a}$ a nonzero ideal sheaf such that $L\otimes \frak{a}^c$ is nef and $f$-abundant, where $c>0$ is a rational number. Let $M$ be a $\Q$-Cartier $\Q$-divisor on $X$ and $D$ a Cartier divisor on $X$ such that $D\equiv K_X+\Delta+L+M$. Then:
\begin{enumerate}
	\item[$\mathrm{(i)}$] Assume that $M\equiv f^*N$, where $N$ is a nef and big $\Q$-Cartier $\Q$-divisor on $Y$. For $i>0$ and $j\geq 0$, we have  
	$$H^i\big(Y, R^jf_*\big(\mathcal{O}_X(D)\otimes\mathcal{J}\big((X, \Delta); \frak{a}^c\big)\big)\big)=0.$$ 
	\item[$\mathrm{(ii)}$] Assume that $M\equiv M'$, where $M'$ is a $f$-semiample $\Q$-Cartier $\Q$-divisor on $X$. Then 
$$R^jf_*\big(\mathcal{O}_X(D)\otimes\mathcal{J}\big((X, \Delta); \frak{a}^c\big)\big)$$ 
is torsion-free for $j\geq 0$.
\end{enumerate} 
\end{thm}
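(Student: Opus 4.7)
My plan is to reduce both statements to the Kollár vanishing and torsion-freeness package for klt pairs \cite{Kol95} by using Proposition \ref{abun} to convert the $f$-abundance hypothesis into a pullback from an Iitaka-type base over $Y$, and then working on a common log resolution.

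\textbf{Common reduction.} First I take a log resolution $\varphi\colon W\to X$ of $(X,\Delta,\frak{a})$ with $\frak{a}\cdot\mathcal{O}_W=\mathcal{O}_W(-F)$; by Definition \ref{ideal}, $\varphi^*L-cF$ is nef and $(f\varphi)$-abundant. Applying Proposition \ref{abun} to it yields $\mu\colon W'\to W$ birational, $g\colon W'\to Z$ a fibration with $Z$ smooth, $h\colon Z\to Y$, and an $h$-nef and $h$-big $\Q$-divisor $B$ on $Z$ with $\mu^*(\varphi^*L-cF)\sim_{\Q}g^*B$. Since $\varphi^*L-cF$ is globally nef and $g$ is surjective, $B$ is in fact nef on $Z$. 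After refining $\mu$ to a log resolution of all data on $W$, the standard manipulation of the multiplier ideal produces a Cartier divisor $D_{W'}$ on $W'$ and an effective SNC boundary $\Delta_{W'}$ with coefficients in $[0,1)$ such that $(W',\Delta_{W'})$ is klt,
\[ D_{W'}\equiv K_{W'}+\Delta_{W'}+g^*B+(\varphi\mu)^*M, \]
$(\varphi\mu)_*\mathcal{O}_{W'}(D_{W'})=\mathcal{O}_X(D)\otimes\mathcal{J}\bigl((X,\Delta);\frak{a}^c\bigr)$, and $R^j(\varphi\mu)_*\mathcal{O}_{W'}(D_{W'})=0$ for $j>0$ via Kollár vanishing along the birational morphism $\varphi\mu$. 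Consequently $R^jf_*\bigl(\mathcal{O}_X(D)\otimes\mathcal{J}\bigl((X,\Delta);\frak{a}^c\bigr)\bigr)\simeq R^j(hg)_*\mathcal{O}_{W'}(D_{W'})$, and the task becomes analyzing the right-hand side in each case.

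\textbf{Part (i).} Here $(\varphi\mu)^*M\equiv(hg)^*N$, so $D_{W'}\equiv K_{W'}+\Delta_{W'}+g^*(B+h^*N)$. The key observation is that $B+h^*N$ is nef and big on $Z$: nefness is clear, and bigness follows by writing $N\equiv A_Y+E_Y$ with $A_Y$ ample and $E_Y$ effective (Kodaira's lemma) and computing the leading term $(B|_F)^{\dim F}\cdot N^{\dim Y}>0$ in the expansion of $(B+h^*N)^{\dim Z}$, where $F$ is the generic fiber of $h$; this uses both the $h$-bigness of $B$ and the bigness of $N$, together with surjectivity of $h$ (which follows from surjectivity of $f\varphi\mu=hg$). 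With $B+h^*N$ nef and big on $Z$, I apply the Kollár package to $(W',\Delta_{W'})$ in two steps: Kollár vanishing for $g\colon W'\to Z$ gives $H^i(Z,R^jg_*\mathcal{O}_{W'}(D_{W'}))=0$ for $i>0$; the Kollár decomposition of the Leray spectral sequence turns $R^j(hg)_*\mathcal{O}_{W'}(D_{W'})$ into $\bigoplus_{p+q=j}R^ph_*R^qg_*\mathcal{O}_{W'}(D_{W'})$; and Kollár vanishing for $h$ on the resulting Kollár-type sheaves on $Z$ (again using $B+h^*N$) yields $H^i(Y,R^{j}(hg)_*\mathcal{O}_{W'}(D_{W'}))=0$ for $i>0$.

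\textbf{Part (ii) and the main obstacle.} When $M\equiv M'$ is $f$-semiample, $(\varphi\mu)^*M'$ is $(hg)$-semiample, and in particular $g$-semiample, so $g^*B+(\varphi\mu)^*M'$ is $g$-semiample. Kollár's torsion-freeness for $(W',\Delta_{W'})$ along $g$ then yields that $R^qg_*\mathcal{O}_{W'}(D_{W'})$ is torsion-free on $Z$ and is itself a Kollár-type sheaf. The Leray degeneration for $hg$, combined with Kollár's torsion-freeness for $h$ acting on these Kollár-type sheaves, then gives torsion-freeness of $R^j(hg)_*\mathcal{O}_{W'}(D_{W'})$ on $Y$. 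The main obstacle throughout is that Proposition \ref{abun} only produces relative positivity of $B$ over $Y$, whereas Kollár's theorems want genuine positivity on the base; this is bridged in (i) by the intersection-theoretic verification that $B+h^*N$ becomes globally nef and big on $Z$, and in (ii) by the careful iterated use of the Kollár package via Leray degeneration. The subtle bookkeeping of the integral divisor $D_{W'}$ and the klt boundary $\Delta_{W'}$ on the common log resolution is technical but routine.
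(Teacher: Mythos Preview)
Your common reduction is essentially the same as the paper's, and your observation that $B+h^*N$ is nef and big on $Z$ is correct. The problem lies in what you do afterwards.

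\textbf{The gap in (i).} Once you have $D_{W'}\equiv K_{W'}+\Delta_{W'}+g^*(B+h^*N)$, applying \cite[Theorem 10.19]{Kol95} to $g$ gives $H^i(Z,R^qg_*\mathcal{O}_{W'}(D_{W'}))=0$ for $i>0$, and one may also have the splitting $R^j(hg)_*\simeq\bigoplus_{p+q=j}R^ph_*R^qg_*$. But your final step, ``Koll\'ar vanishing for $h$ on the resulting Koll\'ar-type sheaves,'' is not a theorem in \cite{Kol95}: the sheaves $R^qg_*\mathcal{O}_{W'}(D_{W'})$ on $Z$ are not of the form $\mathcal{O}_Z(K_Z+\text{klt}+h^*(\text{nef and big}))$, and there is no black-box Koll\'ar vanishing for $R^ph_*$ of such objects. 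Knowing vanishing on $Z$ does not by itself produce vanishing on $Y$. The paper avoids this entirely: it writes $B\sim_{\Q}E_B+H_B$ with $H_B$ $h$-ample and $N\sim_{\Q}E_N+H_N$ with $H_N$ ample, chooses small $\varepsilon,\delta$ so that $\varepsilon H_B+h^*H_N$ is ample on $Z$ and the pair stays klt after adding $\varepsilon\delta g^*E_B+\delta g^*h^*E_N$, then absorbs a general member of $g^*\bigl((1-\varepsilon\delta)B+\delta(\varepsilon H_B+h^*H_N)\bigr)$ into the boundary. What remains is exactly $K_{X'}+(\text{klt})+(1-\delta)(hg)^*N$, and a \emph{single} application of \cite[Theorem 10.19]{Kol95} to $hg\colon X'\to Y$ finishes. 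The point is to reserve a portion of $h^*N$ rather than to iterate.

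\textbf{The gap in (ii).} The same issue recurs: after Koll\'ar torsion-freeness for $g$ you have no mechanism to push torsion-freeness through $h$; the relative positivity of $B$ over $Y$ does not attach to the sheaves $R^qg_*\mathcal{O}_{W'}(D_{W'})$ in any usable way. The paper's argument is completely different and does not use Proposition \ref{abun} again for (ii): since torsion-freeness is unaffected by tensoring with $\mathcal{O}_Y(H)$, one chooses $H$ ample on $Y$ so that for large $N$ the sheaf $\mathcal{O}_X(NM'+f^*H)$ is globally generated, picks a general $E\in|NM'+f^*H|$ with $(X,\Delta+\tfrac{1}{N}E)$ klt, and rewrites
\[
D+f^*H\equiv K_X+\Bigl(\Delta+\tfrac{1}{N}E\Bigr)+L+\Bigl(1-\tfrac{1}{N}\Bigr)f^*H.
\]
This is precisely the set-up of (i), whose proof already yields torsion-freeness of $R^jf_*$, and one untwists by $\mathcal{O}_Y(H)$ at the end.
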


\begin{proof}
We can take a log resolution of $(X, \Delta)$ and $\frak{a}$ denoted by $\varphi\colon W\to X$ such that $\frak{a}\cdot\mathcal{O}_W=\mathcal{O}_W(-F)$ and
$$K_W\sim_\Q\varphi^*(K_{X}+\Delta)+E_W,$$
where the $\Q$-divisor $E_W+qF$ has simple normal crossings support for every $q\in\Q$. Since $L\otimes \frak{a}^c$ is nef and $f$-abundant, the $\Q$-divisor $\varphi^*L-cF$ is nef and $(f\circ\varphi)$-abundant by definition. We deduce
$$\varphi^*D+\lceil E_W-cF\rceil\equiv K_W-E_W+\lceil E_W-cF\rceil+\varphi^*L+\varphi^*M$$
$$\equiv K_W+\lceil E_W-cF\rceil-(E_W-cF)+\varphi^*L-cF+\varphi^*M.$$
The pair $(W, \lceil E_W-cF\rceil-(E_W-cF))$ is klt since $E_W-cF$ has simple normal crossings support. By \cite[Theorem 10.19]{Kol95}, we deduce that 
$$R^j\varphi_*\mathcal{O}_W(\varphi^*D+\lceil E_W-cF\rceil)=0$$
for $j>0$ since $\varphi^*L-cF+\varphi^*M$ is $\varphi$-nef and $\varphi$ is birational. Then we deduce that for $j\geq0$ 
$$R^j(f\circ\varphi)_*\mathcal{O}_W(\varphi^*D+\lceil E_W-cF\rceil)\cong R^jf_*(\varphi_*\mathcal{O}_W(\varphi^*D+\lceil E_W-cF\rceil))$$
$$\cong R^jf_*\big(\mathcal{O}_X(D)\otimes\mathcal{J}\big((X, \Delta); \frak{a}^c\big)\big)$$
by Grothendieck spectral sequence and the definition of $\mathcal{J}\big((X, \Delta); \frak{a}^c\big)$. Thus we can assume $\frak{a}=\mathcal{O}_X$ and $L$ is nef and $f$-abundant from the start. 

We prove statement (i) first. Since $L$ is $f$-nef and $f$-abundant, we have the following commutative diagram by Proposition \ref{abun}.
\begin{center}
\begin{tikzcd}
			& X' \arrow[dl, "\mu" swap] \arrow[dr, "g"]  \\
			X  \arrow[dr, "f"] &&Z \arrow[dl, "h" swap]\\&Y 
\end{tikzcd}
\end{center}
The morphisms $\mu$, $g$ and $h$ are projective and thus the varieties $X'$ and $Z$ are projective. The varieties $X'$ and $Z$ are smooth, $\mu$ is a birational morphism, $g$ is a fibration and there exists a $h$-nef and $h$-big $\Q$-divisor $B$ on $Z$ such that $\mu^*L\sim_{\Q}g^*B$. We can choose $\mu$ such that it is a log resolution of $(X, \Delta)$. Then we have
$$K_{X'}+\Delta_{X'}\sim_\Q\mu^*(K_{X}+\Delta)+E,$$
where the $\Q$-divisors $\Delta_{X'}$ and $E$ are effective and have no common components and $E$ is $\mu$-exceptional. Since $B$ is $h$-nef and $h$-big and $N$ is nef and big, we have that
$$B\sim_{\Q} E_B+H_B \quad \text{and} \quad N\sim_{\Q} E_N+H_N,$$
where $H_B$, $H_N$, $E_B\geq 0$ and $E_N\geq 0$ are $\Q$-Cartier $\Q$-divisors, $H_B$ is $h$-ample and $H_N$ is ample. We deduce
$$\mu^*D+\lceil E\rceil\equiv K_{X'}+\Delta_{X'}+\lceil E\rceil-E+\mu^*L+\mu^*M$$
$$\equiv K_{X'}+\Delta_{X'}+\lceil E\rceil-E+g^*B+g^*h^*N.$$
We define an effective divisor $T$ as $\Delta_{X'}+\lceil E\rceil-E$ and thus $(X', T)$ is klt by the definition of $\Delta_{X'}$ and $E$. By the same argument as above, we have that
$$R^j(h\circ g)_*\mathcal{O}_{X'}(\mu^*D+\lceil E\rceil)\cong R^j(f\circ \mu)_*\mathcal{O}_{X'}(\mu^*D+\lceil E\rceil)$$
$$\cong R^jf_*\mu_*\mathcal{O}_{X'}(\mu^*D+\lceil E\rceil)\cong R^jf_*\mathcal{O}_{X}(D)$$
for every $j\in \N$. We can choose two rational numbers $\varepsilon>0$ and $\delta>0$ which are sufficiently small such that $(X', T':=T+\varepsilon\delta g^*E_B+\delta g^*h^*E_N)$ is klt and $\varepsilon H_B+h^*H_N$ is ample since $H_B$ is $h$-ample and $H_N$ is ample. We deduce that 
$$\mu^*D+\lceil E\rceil-K_{X'}\equiv T+(1-\varepsilon\delta)g^*B+\varepsilon\delta g^*B+\delta g^*h^*N+(1-\delta)g^*h^*N$$
$$\equiv T+(1-\varepsilon\delta)g^*B+\varepsilon\delta g^*(E_B+H_B)+\delta g^*h^*(E_N+H_N)+(1-\delta)g^*h^*N$$
$$\equiv T'+g^*((1-\varepsilon\delta)B+\delta(\varepsilon H_B+h^*H_N))+(1-\delta)g^*h^*N.$$
Since $L$ is nef, we deduce that $B$ is nef and $(1-\varepsilon\delta)B+\delta(\varepsilon H_B+h^*H_N)$ is ample. Thus we can choose an effective $\Q$-divisor $E'\sim_{\Q}g^*((1-\varepsilon\delta)B+\delta(\varepsilon H_B+h^*H_N))$ such that the pair $(X', T'+E')$ is still klt. Since $\mu^*D+\lceil E\rceil\equiv K_{X'}+T'+E'+(1-\delta)g^*h^*N$, we deduce that
$$H^i(Y, R^jf_*\mathcal{O}_{X}(D))\cong H^i(Y, R^j(h\circ g)_*\mathcal{O}_{X'}(\mu^*D+\lceil E\rceil))=0$$
for $i>0$ and $j\geq 0$ by \cite[Theorem 10.19]{Kol95}.

Next, we prove statement (ii). Note that $R^jf_*\mathcal{O}_{X}(D)$ is torsion-free for $j\geq 0$ under the hypotheses of statement (i) by the proof as above. Since $M'$ is $f$-semiample, we can choose an integer $N>0$ sufficiently big and divisible such that the natural morphism $f^*f_*\mathcal{O}_X(NM')\to\mathcal{O}_X(NM')$ is surjective. Then we can take a Cartier divisor $H$ on $Y$ which is sufficiently ample  such that $f_*\mathcal{O}_X(NM')\otimes\mathcal{O}_Y(H)$ is globally generated. Thus $\mathcal{O}_X(NM'+f^*H)$ is globally generated and we can choose an effective $\Q$-divisor $E\sim_{\Q}NM'+f^*H$ such that $(X, \Delta+\frac{E}{N})$ is klt. Then we have
$$D+f^*H\equiv K_X+\Delta+L+M'+\frac{1}{N}f^*H+(1-\frac{1}{N})f^*H$$
$$\equiv K_X+\Delta+\frac{E}{N}+L+(1-\frac{1}{N})f^*H.$$
The proof of statement (i) implies that $R^jf_*\mathcal{O}_{X}(D)\otimes\mathcal{O}_Y(H)$ is torsion-free for $j\geq 0$ and thus $R^jf_*\mathcal{O}_{X}(D)$ is torsion-free for $j\geq 0$ since $\mathcal{O}_Y(H)$ is locally free.
\end{proof}

The following simple example shows that Theorem \ref{main1} does not hold if we only assume that $L\otimes \frak{a}^c$ is $f$-nef and $f$-abundant.

\begin{exa}\label{cou}
Let $Y$ be an elliptic curve and $H$ an ample Cartier divisor on $Y$. Define $X:=\mathbf{P}(\mathcal{O}_Y\oplus\mathcal{O}_Y(-H))$ and let $f:X\to Y$ be the projection. We consider the line bundle $\mathcal{O}_X(1)$ and fix the unique section $C$ of $f$ such that $\mathcal{O}_X(C)\cong\mathcal{O}_X(1)$. Then $3C$ is $f$-ample but it is not nef since $C^2=-\deg H<0$. We have that $K_X\sim -2C-f^*H$ by \cite[Lemma 2.10]{Har77}. We define a Cartier divisor $D:=K_X+3C+f^*H\sim C$ and 
$$H^1(Y, f_*\mathcal{O}_X(D))\cong H^1(Y, \mathcal{O}_Y\oplus\mathcal{O}_Y(-H))\neq 0$$ 
since $Y$ is an elliptic curve.
\end{exa}

Next, we prove two technical results which are used in the proof of Theorem \ref{main2}.

\begin{lemma}\label{tf}
Let $f$ be a surjective morphism from a klt pair $(X, \Delta)$ to a normal projective variety $Y$, $L$ a nef and $f$-abundant $\Q$-Cartier $\Q$-divisor on $X$ and $M$ a $\Q$-Cartier $\Q$-divisor on $X$. Let $D$ be a Cartier divisor on $X$ such that $D\equiv K_X+\Delta+L+M$. Then 
$$R^jf_*\big(\mathcal{O}_X(D)\otimes\mathcal{J}\big((X, \Delta); f, ||M||\big)\big)$$
is torsion-free for $j\geq 0$.
\end{lemma}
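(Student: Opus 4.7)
The plan is to reduce Lemma \ref{tf} to Theorem \ref{1}(ii) by passing to a log resolution on which the asymptotic multiplier ideal is resolved and the moving part of $|pM|$ becomes relatively base-point-free. First I fix $p>0$ sufficiently big and divisible so that $pM$ is Cartier and
$$\mathcal{J}\big((X,\Delta);\,f,\,||M||\big)=\mathcal{J}\big((X,\Delta);\,\tfrac{1}{p}|pM|_{\mathrm{rel}}\big)=\mathcal{J}\big((X,\Delta);\,\mathfrak{b}_p^{1/p}\big),$$
where $\mathfrak{b}_p$ is the relative base ideal of $|pM|$, i.e., the image of $f^*f_*\mathcal{O}_X(pM)\to\mathcal{O}_X(pM)$ twisted by $\mathcal{O}_X(-pM)$.

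Next I take a common log resolution $\varphi\colon W\to X$ of $(X,\Delta)$ and $\mathfrak{b}_p$, so that $\mathfrak{b}_p\cdot\mathcal{O}_W=\mathcal{O}_W(-F_p)$, $\varphi^*(pM)=M_p+F_p$ with $M_p$ globally $(f\circ\varphi)$-free, and $K_W\sim_\Q\varphi^*(K_X+\Delta)+E_W$ where $E_W+\tfrac{1}{p}F_p$ has simple normal crossings support. A direct rewriting gives
$$\varphi^*D+\lceil E_W-\tfrac{1}{p}F_p\rceil\equiv K_W+T_W+\varphi^*L+\tfrac{1}{p}M_p,$$
where $T_W=\lceil E_W-\tfrac{1}{p}F_p\rceil-(E_W-\tfrac{1}{p}F_p)$ has snc support with coefficients in $[0,1)$, so $(W,T_W)$ is klt. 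On the right, $\varphi^*L$ is nef (being the pullback of a nef divisor) and $(f\circ\varphi)$-abundant, because abundance is a birational invariant and the generic fiber of $f\circ\varphi$ is birational to that of $f$; meanwhile $\tfrac{1}{p}M_p$ is $(f\circ\varphi)$-semiample by construction.

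At this point I invoke Theorem \ref{1}(ii) on the klt pair $(W,T_W)$ with morphism $f\circ\varphi$, taking the trivial ideal sheaf $\mathcal{O}_W$, the nef and $(f\circ\varphi)$-abundant divisor $\varphi^*L$, and the $(f\circ\varphi)$-semiample divisor $\tfrac{1}{p}M_p$. This yields that $R^j(f\circ\varphi)_*\mathcal{O}_W(\varphi^*D+\lceil E_W-\tfrac{1}{p}F_p\rceil)$ is torsion-free for every $j\geq 0$. To transfer this down to $X$, I apply the same Kawamata--Viehweg-type vanishing used at the beginning of the proof of Theorem \ref{1} to conclude that $R^i\varphi_*\mathcal{O}_W(\varphi^*D+\lceil E_W-\tfrac{1}{p}F_p\rceil)=0$ for $i>0$, and by the local definition of multiplier ideals,
$$\varphi_*\mathcal{O}_W(\varphi^*D+\lceil E_W-\tfrac{1}{p}F_p\rceil)\cong \mathcal{O}_X(D)\otimes\mathcal{J}\big((X,\Delta);\,\mathfrak{b}_p^{1/p}\big)=\mathcal{O}_X(D)\otimes\mathcal{J}\big((X,\Delta);\,f,\,||M||\big).$$
Then the Grothendieck/Leray spectral sequence degenerates and gives an isomorphism between the torsion-free sheaf on $W$-side and $R^jf_*\bigl(\mathcal{O}_X(D)\otimes\mathcal{J}((X,\Delta);f,||M||)\bigr)$, yielding the claim.

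The only genuine subtlety I anticipate is verifying that $\varphi^*L$ is $(f\circ\varphi)$-abundant, which requires that numerical and Iitaka dimensions are preserved when restricting to (a smooth birational model of) the generic fiber and pulling back by a birational morphism; this is standard, but should be recorded carefully. Everything else is a bookkeeping combination of the log-resolution identity, Theorem \ref{1}(ii), and Kollár-type relative vanishing, exactly in the spirit of the reduction step already carried out in the proof of Theorem \ref{1}.
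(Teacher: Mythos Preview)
Your proof is correct and follows essentially the same route as the paper's own proof: pass to a log resolution of $(X,\Delta)$ and the relative base ideal of $|pM|$, observe that the moving part $\varphi^*M-\tfrac{1}{p}F_p$ is $(f\circ\varphi)$-semiample while $\varphi^*L$ remains nef and $(f\circ\varphi)$-abundant, apply Theorem~\ref{1}(ii), and then descend via relative vanishing and the Leray spectral sequence. The only cosmetic difference is that the paper verifies relative semiampleness by twisting with a sufficiently ample line bundle from $Y$, whereas you assert $(f\circ\varphi)$-freeness of $M_p$ directly; both are standard and equivalent here.
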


\begin{proof}
If $\mathcal{J}\big((X, \Delta); f, ||M||\big)=0$, the conclusion is trivial, so we assume that $\mathcal{J}\big((X, \Delta); f, ||M||\big)\neq0$ from the start. We can choose an integer $p$ sufficiently big and divisible such that $pM$ is Cartier and $\mathcal{J}\big((X, \Delta); f, ||M||\big)=\mathcal{J}\big((X, \Delta); f, \frac{1}{p}|pM|\big)$. We consider the following adjoint morphism 
$$f^*f_*\mathcal{O}_{X}(pM)\to \mathcal{O}_{X}(pM).$$
The image of this morphism is $\mathcal{O}_{X}(pM)\otimes\mathcal{I}_p$, where $\mathcal{I}_p$ is the relative base ideal of the linear series $|pM|$. We can take a log resolution of $(X, \Delta)$ and $\mathcal{I}_p$ denoted by $\varphi\colon W\to X$ such that $\mathcal{I}_p\cdot\mathcal{O}_W=\mathcal{O}_W(-F_p)$ and
$$K_W\sim_\Q\varphi^*(K_{X}+\Delta)+E_W,$$
where the $\Q$-divisor $E_W+qF_p$ has simple normal crossings support for every $q\in\Q$. The image of the new adjoint morphism
$$\varphi^*f^*f_*\mathcal{O}_{X}(pM)\to \mathcal{O}_{W}(\varphi^*(pM))$$
is $\mathcal{O}_{W}(\varphi^*(pM)-F_p)$. We can take a Cartier divisor $H$ which is sufficiently ample such that $f_*\mathcal{O}_{X}(pM)\otimes\mathcal{O}_Y(H)$ is globally generated. Then we deduce that $\mathcal{O}_{W}(\varphi^*(pM)-F_p+\varphi^*f^*H)$ is globally generated and thus $\varphi^*M-\frac{1}{p}F_p$ is $(f\circ\varphi)$-semiample. We have
$$\varphi^*D+\lceil E_W-\frac{1}{p}F_p\rceil\equiv K_W-E_W+\lceil E_W-\frac{1}{p}F_p\rceil+\varphi^*L+\varphi^*M$$
$$\equiv K_W+\lceil E_W-\frac{1}{p}F_p\rceil-(E_W-\frac{1}{p}F_p)+\varphi^*L+\varphi^*M-\frac{1}{p}F_p.$$
The pair $(W, \lceil E_W-\frac{1}{p}F_p\rceil-(E_W-\frac{1}{p}F_p))$ is klt since $E_W-\frac{1}{p}F_p$ has simple normal crossings support. Since $\varphi^*L$ is nef and $(f\circ\varphi)$-abundant and $\varphi^*M-\frac{1}{p}F_p$ is $(f\circ\varphi)$-semiample, we deduce that $R^j(f\circ\varphi)_*\mathcal{O}_W(\varphi^*D+\lceil E_W-\frac{1}{p}F_p\rceil)$ is torsion-free for $j\geq0$ by Theorem \ref{1}. Since $\varphi^*L+\varphi^*M-\frac{1}{p}F_p$ is $\varphi$-nef and $\varphi$ is birational, we deduce that 
$$R^j(f\circ\varphi)_*\mathcal{O}_W(\varphi^*D+\lceil E_W-\frac{1}{p}F_p\rceil)\cong R^jf_*\big(\mathcal{O}_X(D)\otimes\mathcal{J}\big((X, \Delta); f, ||M||\big)\big)$$
for $j\geq0$ by the same argument as in Theorem \ref{1} and this finishes the proof.
\end{proof}

\begin{prop}\label{tmi}
Let $f$ be a surjective morphism from a klt pair $(X, \Delta)$ to a projective variety $Y$ and $M$ a $f$-nef and $f$-abundant $\Q$-Cartier $\Q$-divisor on $X$. Then $\mathcal{J}\big((X, \Delta); f, ||M||\big)=\mathcal{O}_X$.
\end{prop}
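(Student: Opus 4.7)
The plan is to reduce to a subadditivity argument on a log resolution, after which a perturbation of $M$ by an ample divisor becomes $(f\circ\varphi)$-ample and has trivial asymptotic multiplier ideal. First I would fix $p$ sufficiently big and divisible so that $pM$ is Cartier and $\mathcal{J}((X,\Delta);f,||M||)=\mathcal{J}((X,\Delta);f,\tfrac{1}{p}|pM|)$, and take a log resolution $\varphi\colon W\to X$ of $(X,\Delta)$ and of the relative base ideal $\mathcal{I}_p$ of $|pM|_f$. Write $K_W\sim_\Q \varphi^*(K_X+\Delta)+E_W$, with all coefficients of $E_W$ strictly greater than $-1$ by the klt assumption, and $\mathcal{I}_p\cdot \mathcal{O}_W=\mathcal{O}_W(-F_p)$. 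I would then introduce the effective divisor $\Delta_W$ with coefficients in $[0,1)$ for which $(W,\Delta_W)$ is a log smooth klt pair and $K_W+\Delta_W-\varphi^*(K_X+\Delta)$ is $\varphi$-exceptional and effective.

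Next I would fix an ample Cartier divisor $A$ on $W$; since $\varphi^*M$ is $(f\circ\varphi)$-nef and $A$ is $(f\circ\varphi)$-ample, the sum $\varphi^*M+\epsilon A$ is $(f\circ\varphi)$-ample for every rational $\epsilon>0$. Thus $|q(\varphi^*M+\epsilon A)|_{f\circ\varphi}$ becomes relatively base-point-free for $q$ sufficiently divisible, giving
\[
\mathcal{J}\big((W,\Delta_W);f\circ\varphi,||\varphi^*M+\epsilon A||\big)=\mathcal{O}_W \quad\text{and}\quad \mathcal{J}\big(f\circ\varphi,||\epsilon A||\big)=\mathcal{O}_W,
\]
using that $(W,\Delta_W)$ is klt. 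The subadditivity theorem for asymptotic multiplier ideals on the log smooth klt pair $(W,\Delta_W)$ then yields
\[
\mathcal{J}\big((W,\Delta_W);f\circ\varphi,||\varphi^*M+\epsilon A||\big)\subseteq \mathcal{J}\big((W,\Delta_W);f\circ\varphi,||\varphi^*M||\big)\cdot \mathcal{J}\big(f\circ\varphi,||\epsilon A||\big).
\]
The $f$-abundance hypothesis ensures $|q\varphi^*M|_{f\circ\varphi}$ is nonempty for large divisible $q$, so the middle multiplier ideal is nonzero, and combined with the two equalities above the inclusion forces it to equal $\mathcal{O}_W$.

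Finally, unwinding $\mathcal{J}((W,\Delta_W);f\circ\varphi,||\varphi^*M||)=\mathcal{O}_W$ on the log resolution $W$ says exactly that $\Delta_W+\tfrac{1}{p}F_p$ has every coefficient strictly less than $1$, equivalently $\tfrac{1}{p}\mathrm{ord}_D F_p<a(D;X,\Delta)$ at every prime divisor $D$ on $W$. This is precisely the condition needed to conclude that $\lceil E_W-\tfrac{1}{p}F_p\rceil$ is effective, vanishes on non-$\varphi$-exceptional components (where $E_W$ has coefficient in $(-1,0]$), and is $\varphi$-exceptional effective on exceptional components; hence $\mathcal{J}((X,\Delta);f,||M||)=\varphi_*\mathcal{O}_W(\lceil E_W-\tfrac{1}{p}F_p\rceil)=\mathcal{O}_X$. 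The hard part will be invoking a clean form of the (relative) subadditivity of asymptotic multiplier ideals for the log smooth klt pair $(W,\Delta_W)$; alternatively I could bypass this via Proposition \ref{abun}, reducing to an $h$-nef and $h$-big divisor $B$ on smooth $Z$ and bounding the relative base ideal of $|pB|_h$ via Kodaira's lemma together with $N_\sigma(B)=0$ for $h$-nef divisors.
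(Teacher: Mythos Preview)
Your main line of attack has a genuine gap: the subadditivity theorem for asymptotic multiplier ideals that you invoke,
\[
\mathcal{J}\big((W,\Delta_W);f\circ\varphi,||\varphi^*M+\epsilon A||\big)\subseteq \mathcal{J}\big((W,\Delta_W);f\circ\varphi,||\varphi^*M||\big)\cdot \mathcal{J}\big(f\circ\varphi,||\epsilon A||\big),
\]
requires both divisors to be \emph{big} (respectively relatively big); see \cite[Theorem~11.2.3(iii)]{Laz04b}. Here $\varphi^*M$ is only $(f\circ\varphi)$-nef and $(f\circ\varphi)$-abundant, so the hypothesis is not met. Worse, your argument as written uses the $f$-abundance of $M$ only to ensure $|q\varphi^*M|_{f\circ\varphi}\neq\emptyset$; everything else goes through with mere $f$-nefness plus relative effectivity. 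But the conclusion is false under those weaker hypotheses. Take $Y$ a point and the surface $X=\mathbf{P}(E)$ of Example~\ref{CJ}: the section $C$ is nef with $\kappa(X,C)=0$ and $|pC|=\{pC\}$ for all $p\geq1$, so $\mathcal{J}(||C||)=\mathcal{J}(C)=\mathcal{O}_X(-C)\neq\mathcal{O}_X$, while $C+\epsilon A$ is ample and $\mathcal{J}(||C+\epsilon A||)=\mathcal{O}_X$. This simultaneously shows that subadditivity fails here and that the proposition is false without abundance---so any valid proof must use abundance in an essential way, which yours does not.

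Your closing ``alternative'' is in fact the correct route, and it is exactly what the paper does. Using Proposition~\ref{abun} (together with the argument of \cite[Lemma~5.11]{EV92}) one produces a birational $\mu\colon W\to X$ and an effective divisor $F$ such that $\mu^*M-\varepsilon F$ is $(f\circ\mu)$-\emph{semiample} for all small rational $\varepsilon>0$. Choosing $\varepsilon$ so small that $(W,\Delta_W+\varepsilon F)$ remains klt, one then compares the relative base ideal of $|p\mu^*M|$ on a further log resolution $\nu\colon V\to W$ with $\nu^*(p\varepsilon F)$: semiampleness of $\mu^*M-\varepsilon F$ forces $\nu^*(p\varepsilon F)\geq F_p$, and the klt condition on $(W,\Delta_W+\varepsilon F)$ then gives $\lceil E'\rceil\leq\lceil\nu^*E+\nu^*(\varepsilon F)+E'-\tfrac{1}{p}F_p\rceil$, from which the triviality of the multiplier ideal follows. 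You should replace the subadditivity step with this direct base-ideal comparison.
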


\begin{proof}
By Proposition \ref{abun} and the proof of \cite[Lemma 5.11]{EV92}, we deduce that there exist a birational morphism $\mu\colon W\to X$ where $W$ is smooth and projective and an effective divisor $F$ on $W$ such that $\mu^*M-\varepsilon F$ is $(f\circ\mu)$-semiample for any rational number $\varepsilon>0$ sufficiently small. We can choose $\mu$ such that it is also a log resolution of $(X, \Delta)$. Then we have
$$K_W+\Delta_W\sim_\Q\mu^*(K_{X}+\Delta)+E,$$
where the $\Q$-divisors $\Delta_W$ and $E$ are effective and have no common components and $E$ is $\mu$-exceptional. We can choose a rational number $\varepsilon>0$ sufficiently small such that $\mu^*M-\varepsilon F$ is $(f\circ\mu)$-semiample and $(W, \Delta_W+\varepsilon F)$ is klt since $(W, \Delta_W)$ is klt. We fix this $\varepsilon$. We can choose an integer $p$ sufficiently big and divisible such that $p(\mu^*M-\varepsilon F)$ and $p\varepsilon F$ are Cartier, $\mathcal{J}\big((X, \Delta); f, ||M||\big)=\mathcal{J}\big((X, \Delta); f, \frac{1}{p}|pM|\big)$ and the following adjoint morphism 
$$(f\circ\mu)^*(f\circ\mu)_*\mathcal{O}_W(p(\mu^*M-\varepsilon F))\to\mathcal{O}_W(p(\mu^*M-\varepsilon F))$$
is surjective. We consider the following adjoint morphism 
$$(f\circ\mu)^*f_*\mathcal{O}_{X}(pM)\to\mathcal{O}_{W}(\mu^*(pM)).$$
The image of this morphism is $\mathcal{O}_{W}(\mu^*(pM))\otimes\mathcal{I}_p$, where $\mathcal{I}_p$ is the relative base ideal of the linear series $|\mu^*(pM)|$. We can take a log resolution of $(W, \Delta_W+\varepsilon F)$ and $\mathcal{I}_p$ denoted by $\nu\colon V\to W$ such that $\mu\circ\nu$ is a log resolution of $(X, \Delta)$ and the relative base ideal of the linear series $|pM|$. We have that
$\mathcal{I}_p\cdot\mathcal{O}_V=\mathcal{O}_V(-F_p)$ and
$$K_V\sim_\Q\nu^*(K_{W}+\Delta_W+\varepsilon F)+E'.$$
Then we deduce that
$$K_V\sim_\Q\nu^*(K_{W}+\Delta_W)+\nu^*(\varepsilon F)+E'$$
$$\sim_\Q\nu^*\mu^*(K_{X}+\Delta)+\nu^*E+\nu^*(\varepsilon F)+E'.$$
We consider the following commutative diagram.
	\begin{center}
	\begin{tikzcd}
			(f\circ\mu\circ\nu)^*(f\circ\mu)_*\mathcal{O}_W(p(\mu^*M-\varepsilon F))\arrow[r, "a"] \arrow[d] & \nu^*\mathcal{O}_W(p(\mu^*M-\varepsilon F)) \arrow[d] \\
			 (f\circ\mu\circ\nu)^*f_*\mathcal{O}_{X}(pM)  \arrow[r, "b"] & \mathcal{O}_{V}(\nu^*\mu^*(pM))
	\end{tikzcd}
	\end{center}
Since $a$ is surjective and the image of $b$ is $\mathcal{O}_{V}(\nu^*\mu^*(pM)-F_p)$, we deduce that $\mathcal{O}_{V}(\nu^*\mu^*(pM)-F_p)$ contains $\mathcal{O}_V(\nu^*\mu^*(pM)-\nu^*(p\varepsilon F))$ and thus $\nu^*(p\varepsilon F)\geq F_p$. By our choices of $\nu$ and $\mu$, we have that
$$\mathcal{J}\big((X, \Delta); f, ||M||\big)=\mu_*\nu_*\mathcal{O}_V(\lceil \nu^*E+\nu^*(\varepsilon F)+E'-\frac{F_p}{p}\rceil).$$
Since $E$ is effective and $\nu^*(p\varepsilon F)\geq F_p$, we deduce that 
$$\lceil\nu^*E+\nu^*(\varepsilon F)+E'-\frac{F_p}{p}\rceil\geq \lceil\nu^*E+E'\rceil\geq\lceil E'\rceil.$$
Since $(W, \Delta_W+\varepsilon F)$ is klt, we deduce that $\nu_*\mathcal{O}_V(\lceil E'\rceil)=\mathcal{O}_W$ and thus $\mathcal{J}\big((X, \Delta); f, ||M||\big)=\mathcal{O}_X$.
\end{proof}

\begin{proof}[Proof of Theorem \ref{main2}]
By the same argument as in Theorem \ref{1}, we can assume that $\frak{a}=\mathcal{O}_X$ and $M$ is $f$-nef and $f$-abundant. Then the conclusion is a direct corollary of Lemma \ref{tf} and Proposition \ref{tmi} by choosing $L=0$ in Lemma \ref{tf}.
\end{proof}

Next, we prove Theorem \ref{pluri} based on Theorem \ref{main1}. It is used to give several applications in Section \ref{4}.

\begin{proof}[Proof of Theorem \ref{pluri}]
If $m=1$, this is a direct corollary of Theorem \ref{main1} by taking a Stein factorization of $f$. 

In general, we can take a log resolution of $(X, \Delta)$ and the relative base ideal of the linear series $|D|$ denoted by $\mu$ such that 
$$K_W+\Delta_W\sim_\Q\mu^*(K_{X}+\Delta)+E$$
and the image of $\mu^*f^*f_*\mathcal{O}_X(D)\to\mu^*\mathcal{O}_X(D)$ is a line bundle denoted by $\mathcal{O}_W(\mu^*D-F)$, where the $\Q$-divisors $\Delta_W$ and $E$ are effective and have no common components, $E$ is $\mu$-exceptional, $F$ is an effective Cartier divisor and $\Delta_W+E+F$ has simple normal crossings support. Then we have
$$\mu^*D+\lceil mE\rceil\equiv m\mu^*(K_X+\Delta+L)+\lceil mE\rceil+l\mu^*f^*H$$
$$\equiv m(K_W+\Delta_W+\frac{1}{m}(\lceil mE\rceil-mE)+\mu^*L)+l\mu^*f^*H.$$
Since $m\geq1$, we deduce that $(W, \Delta_W+\frac{1}{m}(\lceil mE\rceil-mE))$ is klt. We have $\mu^*L$ is nef and $(f\circ\mu)$-abundant since $L$ is nef and $f$-abundant. Since $\lceil mE\rceil$ is $\mu$-exceptional, we have $\mu_*\mathcal{O}_W(\mu^*D+\lceil mE\rceil)\cong\mathcal{O}_X(D)$. Thus we can assume from the start that $(X, \Delta)$ is log smooth and the image of the adjoint morphism $f^*f_*\mathcal{O}_X(D)\to\mathcal{O}_X(D)$ is a line bundle denoted by $\mathcal{O}_X(D-F)$, where $F$ is an effective Cartier divisor such that $\Delta+F$ has simple normal crossings support.

Since $H$ is ample, there exists a smallest integer $k\geq -l$ such that $f_*\mathcal{O}_X(D)\otimes\mathcal{O}_Y(kH)$ is globally generated. Since $f^*f_*\mathcal{O}_X(D)\to\mathcal{O}_X(D-F)$ is surjective, we deduce that $\mathcal{O}_X(D+kf^*H-F)$ is globally generated. Thus we can choose a smooth divisor $B\sim D+kf^*H-F$ such that $B$ and $\Delta+F$ have no common components and $\Delta+F+B$ has simple normal crossings support. We define an effective divisor $T$ as $\lfloor\Delta+\frac{m-1}{m}F\rfloor$. We deduce that
$$D-T\equiv K_X+\Delta+L+(m-1)(K_X+\Delta+L)+lf^*H-T$$
$$\equiv K_X+\Delta+L+\frac{m-1}{m}(D-lf^*H)+lf^*H-T$$
$$\equiv K_X+\Delta+L+\frac{m-1}{m}(B+F-(k+l)f^*H)+lf^*H-T$$
$$\equiv K_X+\Delta+\frac{m-1}{m}F-T+\frac{m-1}{m}B+L+(l-\frac{(m-1)(k+l)}{m})f^*H.$$
Since $B$ and $\Delta+F$ have no common components and $\Delta+F+B$ has simple normal crossings support, $(X, \Delta+\frac{m-1}{m}F-T+\frac{m-1}{m}B)$ is klt. Since the coefficients of $\Delta$ are smaller than 1, we deduce that $T\leq F$. We have that $f_*\mathcal{O}_{X}(D-F')\cong f_*\mathcal{O}_{X}(D)$ for any effective divisor $F'\leq F$ since we can factor the adjoint morphism as
$$f^*f_*\mathcal{O}_{X}(D)\to \mathcal{O}_{X}(D-F)\hookrightarrow \mathcal{O}_{X}(D-F')\hookrightarrow \mathcal{O}_{X}(D)$$
and the morphism induced from pushforwards
$$f_*\mathcal{O}_{X}(D)\to f_*\mathcal{O}_{X}(D-F)\hookrightarrow f_*\mathcal{O}_{X}(D-F')\hookrightarrow f_*\mathcal{O}_{X}(D)$$
is identity. Thus we have $f_*\mathcal{O}_{X}(D-T)\cong f_*\mathcal{O}_{X}(D)$. Since we have proved the theorem when $m=1$, we deduce that
$$H^i(Y, f_*\mathcal{O}_X(D)\otimes\mathcal{O}_Y(bH))\cong H^i(Y, f_*\mathcal{O}_X(D-T)\otimes\mathcal{O}_Y(bH))=0$$
for every $i>0$ and every integer $b>\frac{(m-1)(k+l)}{m}-l$. We deduce that $f_*\mathcal{O}_X(D)\otimes\mathcal{O}_Y(bH)$ is $0$-regular with respect to $\mathcal{O}_Y(H)$ and thus globally generated for every integer $b>\frac{(m-1)(k+l)}{m}+n-l\geq -l$. By the definition of $k$, we have 
$$k\leq\frac{(m-1)(k+l)}{m}+1+n-l,$$
which is the same as $k\leq m+mn-l$. We deduce
$$H^i(Y, f_*\mathcal{O}_X(D)\otimes\mathcal{O}_Y(bH))=0$$
for every $i>0$ and every integer $b>\frac{(m-1)(m+mn-l+l)}{m}-l=(m-1)(n+1)-l$. Since $l>(m-1)(n+1)$, we can choose $b=0$ and this finishes the proof.
\end{proof}

\section{Applications}\label{4}

First, we give an application towards the GV-property of sheaves on abelian varieties by proving Theorem \ref{GV}. For the notions of GV-sheaves, M-regular sheaves and sheaves satisfying $\IT$, we refer to \cite[Definition 2.4]{Men21}.

\begin{proof}[Proof of Theorem \ref{GV}]
Let $M=H^{\otimes l}$, where $l>(m-1)(\dim A+1)$ is an integer which can be chosen sufficiently big and $H$ is an ample and globally generated line bundle on $\hat{A}$ which is the dual abelian variety of $A$. Let $\varphi_M:\hat{A}\to A$ be the isogeny induced by $M$. We have the following base change diagram. 
	\begin{center}
	\begin{tikzcd}
			X' \arrow[r, "\psi"] \arrow[d, "f'"] & X \arrow[d, "f"] \\
			 \hat{A} \arrow[r, "\varphi_M" ] & A 
	\end{tikzcd}
	\end{center}
By \cite[Corollary 3.1]{Hac04}, we only need to prove that
$$H^i(\hat{A}, \varphi_M^*f_*\mathcal{O}_X(D)\otimes M)\cong H^i(\hat{A}, f'_*\mathcal{O}_{X'}(\psi^*D)\otimes H^{\otimes l})$$
vanishes for $i>0$. We define a $\Q$-divisor $\Delta'$ by $K_{X'}+\Delta'=\psi^*(K_X+\Delta)$. Since $\psi$ is an \'etale morphism, $\Delta'$ is effective and $(X', \Delta')$ is klt. Since $L$ is nef and $f$-abundant, $\psi^*L$ is nef and $f'$-abundant by the definition of relatively abundant divisors. We deduce that 
$$\psi^*D\equiv \psi^*(m(K_X+\Delta+L))\equiv m(K_{X'}+\Delta'+\psi^*L).$$
By Theorem \ref{pluri}, we deduce that
$$H^i(\hat{A}, f'_*\mathcal{O}_{X'}(\psi^*D)\otimes H^{\otimes l})=0$$
for every $i>0$ and every $l>(m-1)(\dim A+1)$ and this finishes the proof.
\end{proof}

We remark that we can also prove $R^jf_*\mathcal{O}_X(D)$ is a GV-sheaf for $j\geq0$ if $m=1$ under the hypotheses of Theorem \ref{GV} by the same method as above and Theorem \ref{main1}. Note that Theorem \ref{GV} is not true if we only assume $L$ is $f$-nef and $f$-abundant. For example, $f_*\mathcal{O}_X(D)$ in Example \ref{cou} is not a GV-sheaf since GV-sheaves are nef by \cite[Theorem 4.1]{PP11b}, while $f_*\mathcal{O}_X(D)$ is not nef.

The following corollary is a standard consequence of Theorem \ref{GV}.

\begin{coro}
Let $f$ be a morphism from a klt pair $(X, \Delta)$ to an abelian variety $A$, $L$ a nef and $f$-abundant $\Q$-Cartier $\Q$-divisor on $X$ and $D$ a Cartier divisor on $X$ such that $D\equiv m(K_X+\Delta+L)$, where $m\geq1$ is a rational number. Let $H$ be an ample line bundle on $A$. Then:
\begin{enumerate}
	\item[$\mathrm{(i)}$] $f_*\mathcal{O}_X(D)$ is a nef sheaf.

	\item[$\mathrm{(ii)}$] $H^i(A, f_*\mathcal{O}_X(D)\otimes H)=0$ for $i>0$.

	\item[$\mathrm{(iii)}$] $f_*\mathcal{O}_X(D)\otimes H^{\otimes 2}$ is globally generated.
\end{enumerate}
\end{coro}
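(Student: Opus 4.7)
The plan is to deduce everything from Theorem \ref{GV} together with the standard yoga of GV-sheaves on abelian varieties. Set $\mathcal{F}:=f_*\mathcal{O}_X(D)$. By Theorem \ref{GV}, $\mathcal{F}$ is a GV-sheaf on $A$, so the whole statement reduces to well-known formal consequences of the GV-property.

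For (i), I would simply cite \cite[Theorem 4.1]{PP11b}, which asserts that every GV-sheaf on an abelian variety is nef; this is exactly the observation already invoked in the remark preceding the corollary.

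For (ii) and (iii), the key input is the standard Pareschi--Popa principle that tensoring a GV-sheaf on an abelian variety with an ample line bundle produces a sheaf satisfying $\IT$, i.e.\ $H^i(A,\mathcal{F}\otimes H\otimes P)=0$ for every $i>0$ and every $P\in\Pic^0(A)$. Specializing to $P=\mathcal{O}_A$ yields (ii) at once. For (iii), an $\IT$ sheaf is in particular M-regular, hence continuously globally generated in the sense of \cite{PP03}; since tensoring a continuously globally generated sheaf with any ample line bundle produces a globally generated sheaf, we conclude that $\mathcal{F}\otimes H^{\otimes 2}=(\mathcal{F}\otimes H)\otimes H$ is globally generated. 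Both implications can alternatively be phrased via the isogeny $\varphi_H\colon\hat A\to A$ and the Fourier--Mukai transform exactly as in the proof of Theorem \ref{GV}.

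I do not expect any substantive obstacle in this corollary: the creative content is entirely contained in Theorem \ref{GV}, and statements (i)--(iii) are routine applications of the Pareschi--Popa machinery for GV-sheaves on abelian varieties.
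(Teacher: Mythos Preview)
Your proposal is correct and matches the paper's own proof essentially line for line: the paper also deduces (i) from \cite[Theorem 4.1]{PP11b}, obtains (ii) by noting that a GV-sheaf tensored with an ample (hence $\IT$) line bundle satisfies $\IT$ via \cite[Proposition 3.1]{PP11b}, and derives (iii) from \cite[Theorem 2.4]{PP03} since $\IT$ sheaves are M-regular. There is nothing to add.
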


\begin{proof}
By \cite[Theorem 4.1]{PP11b}, every GV-sheaf is nef. Then statement (i) follows from Theorem \ref{GV}. The tensor product of a locally free sheaf satisfying $\IT$ and a GV-sheaf satisfies $\IT$ by \cite[Proposition 3.1]{PP11b} and this proves statement (ii). Statement (iii) follows from \cite[Theorem 2.4]{PP03} since coherent sheaves satisfying $\IT$ are M-regular.
\end{proof}

We recall the definition of the Chen--Jiang decomposition, which is intended to model the behavior of pushforwards of canonical bundles in \cite{CJ18}.

\begin{defi}
Let $\mathcal{F}$ be a coherent sheaf on an abelian variety $A$. The sheaf $\mathcal{F}$ is said to have the \emph{Chen--Jiang decomposition} if $\mathcal{F}$ admits a finite direct sum decomposition
$$\mathcal{F}\cong \bigoplus_{i\in I}(\alpha_i\otimes p_i^*\mathcal{F}_i),$$
where each $A_i$ is an abelian variety, each $p_i\colon A\to A_i$ is a fibration, each $\mathcal{F}_i$ is a nonzero M-regular coherent sheaf on $A_i$, and each $\alpha_i\in\Pic^0(A)$ is a torsion line bundle.
\end{defi}

We have that the sheaf $f_*\mathcal{O}_X(D)$ addressed in Theorem \ref{GV} is a GV-sheaf. However, there are situations when $f_*\mathcal{O}_X(D)$ does not have a Chen--Jiang decomposition under the hypotheses of Theorem \ref{GV} for any rational number $m\geq1$ and any Cartier divisor $D\equiv m(K_X+\Delta+L)$. We will show that \cite[Example 1.1]{Sho00} satisfies this property in the following.

\begin{exa}\label{CJ}
Let $Y$ be an elliptic curve and $E$ a rank $2$ vector bundle on $Y$ which is the unique non-split extension
$$0\to\mathcal{O}_Y\to E\to\mathcal{O}_Y\to0.$$
Define $X:=\mathbf{P}(E)$ and let $f:X\to Y$ be the projection. We consider the line bundle $\mathcal{O}_X(1)$ and fix the unique section $C$ of $f$ such that $\mathcal{O}_X(C)\cong\mathcal{O}_X(1)$. By \cite[Example 1.1]{Sho00}, if there exists a curve $C'$ on $X$ such that $C'\equiv mC$ for some rational number $m$, then $m$ is a positive integer and $C'=mC$. It implies that $\kappa(X, C)=0$. We know $C$ is $f$-abundant and $C^2=0$. We deduce that $C$ is nef and $\kappa_\sigma(X, C)=1\neq\kappa(X, C)$. Thus $C$ is not abundant. 

We have that $K_X\sim -2C$ by \cite[Lemma 2.10]{Har77}. We define $L:=3C$ which is nef and $f$-abundant. If $f_*\mathcal{O}_X(D)$ has the Chen--Jiang decomposition for some rational number $m\geq1$ and some Cartier divisor $D\equiv m(K_X+L)$, we deduce that the continuous evaluation morphism
$$\mathcal{S}=\bigoplus_{\alpha\in \Pic^0(Y)\atop \mathrm{torsion}}H^0(Y, f_*\mathcal{O}_X(D)\otimes\alpha)\otimes\alpha^{-1}\to f_*\mathcal{O}_X(D)$$
is surjective by \cite[Theorem 5.1]{LPS20} and \cite[Lemma 2.9]{Men21}. Since $D\equiv m(K_X+L)\equiv mC$, we deduce that $m$ is an integer and $\mathcal{O}_X(D)\cong \mathcal{O}_X(mC)\otimes f^*P$ for some $P\in\Pic^0(Y)$ by \cite[Proposition 2.3]{Har77}. We know that $f_*\mathcal{O}_X(mC)\cong S^mE$ has a filtration whose successive quotients are $\mathcal{O}_Y$ by the definition of $E$, where $S^mE$ is the $m$-th symmetric product of $E$. We deduce that $f_*\mathcal{O}_X(mC)\otimes Q$ has a filtration whose successive quotients are $Q$ for every $Q\in\Pic^0(Y)$ and thus 
$$H^i(Y, f_*\mathcal{O}_X(mC)\otimes Q)=0$$ 
for every $i\geq0$ and every nontrivial $Q\in\Pic^0(Y)$. If 
$$H^0(Y, f_*\mathcal{O}_X(D)\otimes\alpha)\cong H^0(Y, f_*\mathcal{O}_X(mC)\otimes P\otimes\alpha)\neq0,$$
we deduce that $P$ is a torsion element and $\alpha\cong P^{-1}$. We know $$H^0(Y, f_*\mathcal{O}_X(mC))\cong\C$$
since $\kappa(X, C)=0$. Since the rank of $\mathcal{S}$ is at most $1$ and the rank of $f_*\mathcal{O}_X(D)$ is $m+1\geq2$, the continuous evaluation morphism cannot be surjective, which is a contradiction. Thus $f_*\mathcal{O}_X(D)$ does not have a Chen--Jiang decomposition for any rational number $m\geq1$ and any Cartier divisor $D\equiv m(K_X+L)$ in this example.
\end{exa}

Next, we give several applications to weak positivity based on Theorem \ref{pluri}. We omit the proofs since they follow from well-known strategies which rely on Viehweg's fiber product trick, see \cite[Theorem III]{Vie83}, \cite[Theorem 4.13]{Cam04}, \cite[Theorem 3.30]{Hor10}, \cite[Theorems 1.8 and 4.4]{PS14}, \cite[Theorem 1.1]{Fuj17} and \cite[Theorems D and E]{DM19} for details. We recall the definition of weak positivity.

\begin{defi}[cf. \cite{Vie83}]
A torsion-free coherent sheaf $\mathcal{F}$ on a projective variety $X$ is said to be \emph{weakly positive} on a nonempty open set $U\subseteq X$ if for every ample line bundle $H$ on $X$ and every $a\in\N$, the sheaf $S^{[ab]}\mathcal{F}\otimes H^{\otimes b}$ is generated by global sections at each point of $U$ for $b$ sufficiently big, where $S^{[ab]}\mathcal{F}$ is the reflexive hull of the $ab$-th symmetric product $S^{ab}\mathcal{F}$. We say $\mathcal{F}$ is \emph{weakly positive} if it is weakly positive on some open set $U$.
\end{defi}

If $X$ is smooth, $\Delta=0$ and $L$ is nef and $f$-big, the following theorem is known by \cite[Theorem 4.4]{PS14}. If $(X, \Delta)$ is log canonical and $L=0$, it is known by \cite[Theorem E]{DM19}.

\begin{thm}\label{ggg}
Let $f$ be a fibration from a klt pair $(X, \Delta)$ to a smooth projective variety $Y$ of dimension $n$, $L$ a nef and $f$-abundant $\Q$-Cartier $\Q$-divisor on $X$, $H$ an ample and globally generated line bundle on $Y$ and $A=\omega_Y\otimes H^{\otimes n+1}$. Let $D$ be a Cartier divisor on $X$ such that $D\equiv m(K_{X/Y}+\Delta+L)$, where $m\geq1$ is an integer. Then there exists a nonempty open set $U\subseteq Y$ such that
$$(f_*\mathcal{O}_X(D))^{[s]}\otimes A^{\otimes l}$$
is generated by global sections on $U$ for every $s\geq1$ and $l\geq m$, where $(f_*\mathcal{O}_X(D))^{[s]}$ is the reflexive hull of $(f_*\mathcal{O}_X(D))^{\otimes s}$.
\end{thm}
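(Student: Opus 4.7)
The plan is to reduce the weak positivity statement to the global generation conclusion of Theorem~\ref{pluri} via Viehweg's fiber product trick. Fix $s\geq 1$ and form the $s$-fold fiber product $X^{(s)} := X\times_Y\cdots\times_Y X$, together with a log resolution $\pi_s\colon \widetilde{X}^{(s)}\to X^{(s)}$ of its main component, with induced fibration $f^{(s)}\colon \widetilde{X}^{(s)}\to Y$. Writing $p_i\colon \widetilde{X}^{(s)}\to X$ for the composition of $\pi_s$ with the $i$-th projection, I would set
$$D^{(s)}:=\sum_{i=1}^s p_i^*D,\qquad \widetilde{L}^{(s)}:=\sum_{i=1}^s p_i^*L,$$
and define $\widetilde{\Delta}^{(s)}$ from $\sum_i p_i^*\Delta$ after the standard exceptional correction, so that $(\widetilde{X}^{(s)},\widetilde{\Delta}^{(s)})$ is klt, $\widetilde{L}^{(s)}$ is nef and $f^{(s)}$-abundant (both nefness and $f$-abundance pass to sums of pullbacks along the projections), and $D^{(s)}\equiv m(K_{\widetilde{X}^{(s)}/Y}+\widetilde{\Delta}^{(s)}+\widetilde{L}^{(s)})$.

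Next, I would apply Theorem~\ref{pluri} to the fibration $f^{(s)}$ and the Cartier divisor $D^{(s)}+f^{(s)*}(mK_Y+m(n+1)H)$, which is numerically equivalent to $m(K_{\widetilde{X}^{(s)}}+\widetilde{\Delta}^{(s)}+\widetilde{L}^{(s)})+m(n+1)\,f^{(s)*}H$. The numerical inequality $m(n+1)>(m-1)(n+1)+n$ holds for every $m\geq 1$ and every $n$, so the global generation part of Theorem~\ref{pluri} gives that
$$f^{(s)}_*\mathcal{O}_{\widetilde{X}^{(s)}}(D^{(s)})\otimes \omega_Y^{\otimes m}\otimes H^{\otimes m(n+1)}=f^{(s)}_*\mathcal{O}_{\widetilde{X}^{(s)}}(D^{(s)})\otimes A^{\otimes m}$$
is globally generated on $Y$.

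To transfer this to $(f_*\mathcal{O}_X(D))^{[s]}$, let $U\subseteq Y$ be the open set over which $f$ is smooth and $f_*\mathcal{O}_X(D)$ is locally free. Over $U$ the fiber product $X^{(s)}$ is smooth and $\pi_s$ is an isomorphism over its preimage, so flat base change (or K\"unneth) identifies $f^{(s)}_*\mathcal{O}_{\widetilde{X}^{(s)}}(D^{(s)})|_U$ with $(f_*\mathcal{O}_X(D))^{\otimes s}|_U=(f_*\mathcal{O}_X(D))^{[s]}|_U$, and the previous global generation restricts to give that $(f_*\mathcal{O}_X(D))^{[s]}\otimes A^{\otimes m}$ is generated by global sections on $U$. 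Finally, $A=\omega_Y\otimes H^{\otimes n+1}$ is $0$-regular with respect to $H$ by Kodaira vanishing and Serre duality, hence globally generated, so $A^{\otimes(l-m)}$ is globally generated for every $l\geq m$, and tensoring concludes the proof.

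The main obstacle is the first step: the birational geometry on the resolved fiber product must be handled carefully so that klt-ness, the numerical equivalence involving the relative canonical, and above all the preservation of nefness and $f^{(s)}$-abundance of $\widetilde{L}^{(s)}$ all hold after introducing $\pi_s$-exceptional corrections. Once this bookkeeping is done, exactly as in the proofs of \cite[Theorem 4.4]{PS14} and \cite[Theorem E]{DM19}, the remainder of the argument is an immediate application of Theorem~\ref{pluri}.
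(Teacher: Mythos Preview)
Your proposal is correct and follows exactly the route the paper indicates: the paper explicitly omits the proof of Theorem~\ref{ggg}, stating only that it follows from well-known strategies based on Viehweg's fiber product trick combined with Theorem~\ref{pluri}, and pointing to \cite[Theorems 1.8 and 4.4]{PS14} and \cite[Theorems D and E]{DM19} for the details. Your sketch reproduces precisely this strategy, including the key numerical check $m(n+1)>(m-1)(n+1)+n$ needed to invoke the global generation clause of Theorem~\ref{pluri}, and you correctly flag the one genuinely new bookkeeping point, namely that nefness and $f$-abundance of $L$ pass to $\widetilde{L}^{(s)}=\sum_i p_i^*L$ on the resolved fiber product.
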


As a standard consequence, we obtain the following corollary. 

\begin{coro}\label{wp}
Let $f$ be a fibration from a klt pair $(X, \Delta)$ to a smooth projective variety $Y$, $L$ a nef and $f$-abundant $\Q$-Cartier $\Q$-divisor on $X$ and $D$ a Cartier divisor on $X$ such that $D\equiv m(K_{X/Y}+\Delta+L)$, where $m\geq1$ is an integer. Then $f_*\mathcal{O}_X(D)$ is weakly positive.
\end{coro}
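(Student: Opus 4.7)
The plan is to deduce weak positivity of $\mathcal{F}:=f_*\mathcal{O}_X(D)$ directly from Theorem \ref{ggg} by converting the fixed auxiliary twist $A=\omega_Y\otimes H_0^{\otimes n+1}$ (for $H_0$ some ample and globally generated line bundle on $Y$, chosen to feed into Theorem \ref{ggg}) into an arbitrary ample line bundle. Let $U\subseteq Y$ be the open set furnished by Theorem \ref{ggg}; after intersecting with the locally free locus of the torsion-free sheaf $\mathcal{F}$, I may assume $\mathcal{F}|_U$ is locally free. Crucially, $U$ is chosen once and for all and does not depend on the data $H$, $a$, $b$ appearing in the definition of weak positivity.

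Fix an ample line bundle $H$ on $Y$ and an integer $a\geq 1$. Ampleness of $H$ yields an integer $b_0$ such that $H^{\otimes b}\otimes A^{\otimes -m}$ is globally generated for every $b\geq b_0$. For any such $b$, I apply Theorem \ref{ggg} with $s=ab$ and $l=m$: the sheaf $\mathcal{F}^{[ab]}\otimes A^{\otimes m}$ is globally generated on $U$. Tensoring the generating sections with sections of the globally generated line bundle $H^{\otimes b}\otimes A^{\otimes -m}$ exhibits $\mathcal{F}^{[ab]}\otimes H^{\otimes b}$ as globally generated on $U$ as well.

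It remains to pass from $\mathcal{F}^{[ab]}$ to the reflexive hull $S^{[ab]}\mathcal{F}$ of the symmetric power. The canonical quotient $\mathcal{F}^{\otimes ab}\twoheadrightarrow S^{ab}\mathcal{F}$ followed by the inclusion of $S^{ab}\mathcal{F}$ into its double dual $S^{[ab]}\mathcal{F}$ factors through $\mathcal{F}^{[ab]}=(\mathcal{F}^{\otimes ab})^{**}$ by the universal property of the double dual for reflexive targets, yielding a map $\mathcal{F}^{[ab]}\to S^{[ab]}\mathcal{F}$ which is surjective on the locally free locus of $\mathcal{F}$, and in particular at every point of $U$. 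Pushing the previously obtained generating sections through this map shows $S^{[ab]}\mathcal{F}\otimes H^{\otimes b}$ is generated by global sections at each point of $U$ for every $b\geq b_0$, which is precisely the definition of weak positivity.

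I do not anticipate any serious obstacle: the real content resides in Theorem \ref{ggg}, and what remains is a purely formal reduction in the style of Viehweg. The only minor care concerns the interplay between reflexive hulls of tensor and symmetric powers, handled above by restricting to the locally free locus of $\mathcal{F}$.
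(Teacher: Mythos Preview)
Your proposal is correct and follows exactly the approach the paper indicates: the paper explicitly states that Corollary~\ref{wp} is a ``standard consequence'' of Theorem~\ref{ggg} and omits the details, and your argument spells out precisely this standard Viehweg-style deduction. The only quibble is that the ``universal property of the double dual'' is phrased a bit loosely; the cleaner justification is that on the smooth variety $Y$ both $\mathcal{F}^{[ab]}$ and $S^{[ab]}\mathcal{F}$ are reflexive and agree with $\mathcal{F}^{\otimes ab}$ and $S^{ab}\mathcal{F}$ on the locally free locus of $\mathcal{F}$ (whose complement has codimension $\geq 2$), so the quotient map extends uniquely---but this is routine and does not affect the argument.
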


It was first proved in \cite[Theorem III]{Vie83} for pushforwards of relative pluricanonical bundles. If $X$ is smooth, $\Delta=0$ and $L$ is nef and $f$-big, it is known by \cite[Theorem 3.30]{Hor10} when $m=1$ and by \cite[Theorem 1.8]{PS14} when $m\geq1$. If $(X, \Delta)$ is log canonical and $L=0$, it is known by \cite[Theorem 4.13]{Cam04}, \cite[Theorem 1.1]{Fuj17} and \cite[Theorem D]{DM19}.

	\bibliographystyle{amsalpha}
	\bibliography{biblio}	

\end{document}